\newtheorem{theorem}{Theorem}[section]
\newtheorem{corollary}[theorem]{Corollary}
\newtheorem{definition}[theorem]{Definition}
\newtheorem{remark}[theorem]{Remark}
\def\wh{\widehat}
\def\wt{\widetilde}
\def\DO{\mathcal D}
\def\RE{\mathbb R}
\def\CO{{\mathbb C}}
\def\N{\mathbb N}
\def\C{\mathcal C}
\def\L{\mathcal L}
\def\K{\mathcal K}
\def\A{\mathcal A}
\def\W{\mathcal W}
\def\F{\mathcal F}
\begin{document}

\title[ODEs with point interactions: An
inverse problem]{Ordinary differential equations with point
interactions: An inverse problem}

\author[Dias, Jorge and Prata]{Nuno Costa Dias, Cristina Jorge and Jo\~{a}o Nuno Prata}

\begin{abstract}
Given a linear ordinary differential equation (ODE) on $\RE$ and a set of interface conditions at a finite set
of points $I \subset \RE$, we consider the problem of determining another differential equation whose {\it global}
solutions satisfy the original ODE on $\RE \backslash I $, and the interface conditions at $I $. Using an
extension of the product of distributions with non-intersecting singular supports presented in [L. H\"ormander,
The Analysis of Linear Partial Diffe\-rential Operators I, Springer-Verlag, 1983], we determine an
{\it intrinsic} solution of this problem, i.e. a new ODE, satisfying the required conditions, and strictly
defined within the space of Schwartz distributions. Using the same formalism, we determine a singular perturbation formulation for the $n$-th order derivative operator with interface conditions.

\end{abstract}

\maketitle

\keywords{{\bf Keywords:} Linear ordinary differential equation with distributional coefficients; multiplicative products of distributions; point interactions }

\section{Introduction}

Let ODE$_1$ be a linear ordinary differential equation of the form
\begin{equation} \label{ODE1}
\sum_{i=0}^n a_i D^{i}\psi=f
\end{equation}
where $a_i,f\in\C^{\infty}(\RE)$, and ${a_n}(x) \ne 0$ for all $x \in \mathbb{R}$. Moreover, $D^{i}
\psi=\psi^{(i)}$ is the $i$th order distributional derivative of $\psi$. Let also ${ I}\subset \RE$
be a finite set.

In this paper we consider the following problem:\\

\noindent\textbf{Problem 1.} {\it Let ODE$_1$ and ${ I}$ be defined as above. We want to determine a new
ordinary differential equation (let us denote it by ODE$_2$) whose global solutions $\psi$ satisfy:

\begin{enumerate}

\item[(C1)] The ODE$_1$ on $\RE \backslash { I}$.

\item[(C2)] At each point $x_0 \in { I}$, the interface conditions:
\begin{equation}\label{importantef}
A(x_0)\overline{\psi({x_0^ -})} =B(x_0)\overline{\psi({x_0^ + })}
\end{equation}
where
\begin{equation}\label{77}
A(x_0)=\left[A_{ij}(x_0)\right],\ \ B(x_0)=\left[B_{ij}(x_0)\right]
\end{equation}
are $m \times n$ (in general complex valued) matrices with $m \le
n$, and
\begin{equation} \label{overbarpsi}
\overline{\psi({x_0^ \pm })}=\lim_{\epsilon \to 0^+}(\psi(x_0 \pm \epsilon),...,D^{n-1}\psi(x_0\pm \epsilon))^T
\, .
\end{equation}

\end{enumerate}

}

At each point $x_0 \in I$, the conditions (\ref{importantef}) can be {\it separating} or {\it interacting}. In
the separating case they reduce to a set of conditions for $\overline{\psi(x_0^{\mathbin{+}})}$ and
another set of conditions for $\overline{\psi(x_0^{\mathbin{-}})}$. In this case the values of
$\overline{\psi(x_0^{\mathbin{+}})}$ and $\overline{\psi(x_0^{\mathbin{-}})}$ are
independent of each other. In the interacting case the conditions relate the values of
$\overline{\psi(x_0^{{+}} )}$ with those of $\overline{\psi(x_0^{{-} })}$. If they do
not completely fix the values of $\overline{\psi(x_0^{{-}})}$ in terms of those of
$\overline{\psi(x_0^{{+}} )}$ or vice-versa, we say that the conditions are only {\it partially
interacting}. We remark that in the case of separating conditions, the Cauchy problem for the ODE$_2$ 
may have no solutions for some initial conditions, and the solution may not be unique for other initial conditions (this can also be the case for partially interacting conditions - see Corollary \ref{Corollary4.6} for an example).

The aim of this paper is to obtain the explicit form of the equation ODE$_2$. 
A possible approach is to allow this equation to display
distributional (maybe singular) coefficients. 
Differential equations with singular (or just discontinuous) 
coefficients have been studied using several different approaches. 
Most significative are the intrinsic formulations, defined in terms of a suitable 
product of Schwartz distributions \cite{{Bag95,Cad08,DP09,DPJ16,HO07,Obe92,Sarrico}} and the formulations
using generalized functions
\cite{{Col84,Col85,GKOS01,HO09}}.  The latter approach uses objects that are more general than the Schwartz distributions, and yields the most
general, but also the most complex formulation of these 
differential problems. The intrinsic products, on the other hand,
provide formulations that are simpler, but often limited to very
particular cases. The model product, for instance, which is the
most general product in the hierarchy given by
Oberguggenberger in (section 7, \cite{Obe92}), has been used
to formulate ODEs with discontinuous coefficients and displaying
non-smooth solutions, but is not compatible with the case of
singular coefficients and discontinuous solutions (cf.
\cite{HH08,HO07}).

A closely related problem from the theory of differential operators, is the formulation of singular perturbations of one-dimensional Schr\"odinger operators \cite{Albeverio1,Albeverio2,Brasche,DPP11,Exner,Golovaty,Kurasov} (particularly of the free Schr\"odinger operator $\wh H_0=-D^2$), and also of the $n$-th order derivative operator $\wh L_0=(iD)^n$ \cite{KurasovBoman}. This topic has been thoroughly study in the literature in connection with the problem of relating the singular perturbations with self-adjoint (s.a.) interactions, i.e. with the s.a. realizations of $\wh H_0$ (and $\wh L_0$) in domains of functions satisfying some interface conditions at some finite set of points.    

Important results, in this context, were obtained by Kurasov and Boman. They determined an explicit formulation of the s.a. singular perturbations of $\wh H_0$ \cite{Kurasov} and $\wh L_0$ \cite{KurasovBoman} (see also \cite{Albeverio2,Brasche}) using a theory of distributions acting on discontinuous test functions. Their formulation, however, is not intrinsic; Kurasov distributions are more general objects than Schwartz distributions  and, just like in the Colombeau case, this approach requires the new operators to be formulated in terms of the (more complex) structure of the new space of distributions. This includes a new derivative operator (that does not satisfy the Leibniz rule, the derivative of a constant is not zero, etc.), new distributions and a new dual product.

 In this paper we will work strictly within the space of Schwartz distributions. Using the intrinsic approach developed in \cite{DP09,DPJ16} we will determine a formulation of the ODE$_2$ that solves Problem 1 for all cases.
As a by-product we will also consider the operator $\wh L_0$, and determine an explicit formulation of its singular perturbations that correspond to an interface condition of the form (\ref{importantef}), with $x_0\in I=\{0\}$. This set of singular perturbations includes all s.a. extensions of the symmetric restriction of $\wh L_0$ to $\DO(\RE \backslash \{0\})$.

Our formulation is based on the intrinsic multiplicative
product of Schwartz distributions, denoted $*$, that was defined
and studied in \cite{DP09,DPJ16}. This product is an extension of the
product of distributions with disjoint singular
supports presented by H\"ormander in \cite{Hor83}. It is associative, distributive, non-commutative,
reproduces the standard product of functions for regular
distributions, and satisfies the Leibniz rule with respect to the
usual Schwartz distributional derivative.

In view of the Schwartz impossibility result \cite{Sch54}, the
product $*$ cannot be defined in the entire space $\DO'(\RE)$. In
fact, it is only defined in the subspace $\A= \cup_{i=0}^\infty
D^i [\C^{\infty}_p(\RE)]\subset\DO'(\RE)$, where
$\C^{\infty}_p(\RE)$ is the set of piecewise smooth functions. In
$\A$ the choice of the product $*$ is optimal: we have recently
proved \cite{DJP16-2} that $*$ is essentially the unique
multiplicative product defined in $\A$ that satisfies all the
properties stated in the Schwartz impossibility result. Moreover,
the space $\A$ is large enough to allow for the formulation of a
significative class of differential problems with discontinuous
solutions. In particular, it is possible to obtain a consistent
formulation of linear ODEs with coefficients and solutions in
$\A$. This, as we will see, is the case of the ODE$_2$ that will
be derived here.

The plan of the paper is the following: For the convenience of the reader, in the next section we briefly
review the definition and main
properties of the distributional product that was presented in
\cite{DP09}. We also review the definition and properties of the
operators of (left and right) ``multiplication by the $n$th-order
derivative of a Dirac delta'' that were defined in \cite{DPJ16}.
In Section \ref{new} we define a new derivative operator and study
its main properties. This operator is then used in Section
\ref{Linear} to determine a possible solution of Problem 1. We
also determine a second formulation of the ODE$_2$, which is
written in terms of the standard distributional derivative. The two formulations are equivalent.
In section \ref{Simple example}, we illustrate the formulation of the ODE$_2$ using a simple example.
In section \ref{Derivative} we use the previous results to determine an explicit formulation for the singular perturbations of the $n$-th order derivative operator. Finally, in section \ref{Conclusions} we provide a more detailed discussion of the relation between Kurasov's approach and ours, and also discuss some future work.   

In this paper we have considered the case of a linear ODE$_1$ with
smooth coefficients, and linear interface conditions. It is clear however 
from the presentation, that the main results can be extended, at least, to 
some cases of non-linear interface
conditions, non-smooth coefficients, and some
classes of non-linear ODEs.\\

\noindent\textbf{Notation}. The letters  $\Omega$ and $\Theta$
denote open subsets of $\RE$, and $I$ and $J$ are discrete sets of
real numbers. Usually, the letters $F$ and $G$ denote
distributions, $f$ and $g$ are (piecewise) smooth functions, and
$\psi$, $\phi$ may be distributions or regular functions. Letters
with a hat denote operators. The functional spaces are denoted by
calligraphic capital letters, e.g. $\A(\Omega)$,
$\C^\infty(\Omega)$, $\DO(\Omega)$,.... When $\Omega = \RE$ we
usually write only $\A$, $\C^\infty$, $\DO$,...

The characteristic function of $\Omega \subseteq \RE$ is written $\chi_\Omega$, the Heaviside step function is
$H= \chi_{\RE_{\mathbin{+}}}$, and $H_-=1-H$. As usual, $\delta(x-x_0)$ is the Dirac measure with
support at $x_0$.

\section{Preliminaries} \label {pri}

In this section we review some results about Schwartz
distributions, and present the main properties of the
multiplicative product of distributions $*$ that was proposed in
\cite{DP09}. We also review the definition of the operators of
(left and right) ``multiplication by a Dirac delta'' that were
proposed in \cite{DPJ16}. The reader should refer to
{\cite{DP09,DPJ16}} for details and proofs of the main
results.

\subsection{Algebras of Schwartz distributions}

We start by presenting some basic definitions. Let $\DO$ be the
space of smooth functions $t: \RE \to \CO$ of compact support. Its
dual $\DO'$ is the space of Schwartz distributions. The singular
support of a distribution $F\in\DO'$, denoted by $\textrm {sing
supp } F$, is  the complement of the largest open set $\Omega
\subseteq\RE$ for which there is $f\in \C^\infty(\Omega)$  such
that $F|_{\Omega}=f$ (where $F|_{\Omega}$ denotes the restriction
of $F$ to $\DO(\Omega)$). Another useful concept is the order of a
distribution [pag.43, \cite{Kan98}]: we say that $F \in \DO'$ is
of order $n$ (and write $n=$ ord $F$) iff $F$ is the $n$th order
distributional derivative (but not a lower order distributional
derivative) of a regular distribution. A distribution of order
zero is a regular distribution.

Let $\C^\infty_p$ be the space of piecewise smooth functions on $\RE$: ${ f} \in \C^\infty_p$ iff there is a
finite set ${ I}\subset \RE$ such that ${ f} \in \C^{\infty}(\RE \backslash { I})$ and the lateral limits
$\lim_{x \to x_0^{\pm}} { f} ^{(j)} { (x)}$ exist and are finite for all $x_0 \in { I}$ and all derivatives of
${f} $.

A distributional extension of the set $\C^\infty_p$ is given by:

\begin{definition} \label{c}
Let $\A$ be the space of all piecewise smooth functions $\C^\infty_p$ (regarded as regular distributions)
together with all their distributional derivatives to all orders.

Let $I \subset \RE$ be a finite set. We also define the following subspaces of $\A$:

\begin{equation} \label{subspacesA}
\A_I=\left\{ F \in \A : \, \mbox{sing supp} \, F \subseteq I \right\} \, .
\end{equation}

\end{definition}

Notice that $\A_I \subseteq \A_J \subseteq \A$ for all $I \subseteq J$. 
All the elements of $\A$ are distributions with finite singular supports. They can be written in the form:

\begin{theorem} \label{f6}
$F \in \A$ iff there is a finite set ${ I}=\{x_1<x_2<...<x_m\}$ associated with a set of open intervals
$\Omega_i=(x_i,x_{i+1})$, $i=0,..,m$ (where $x_0=-\infty$ and $x_{m+1}=+\infty$)  such that:
\begin{equation}\label{FormF}
F= f+\Delta
\end{equation}
where ($\chi_{\Omega_i}$ is the characteristic function of $\Omega_i$):
\begin{equation} \label{FormF2}
 f=\sum_{i=0}^m f_i \chi_{\Omega_i}\ \ \text{  and } \ \ \Delta=\sum_{i=1}^m \sum_{j=0}^n
{F}_{ij}\delta^{(j)}(x-x_i)
\end{equation}
for some ${ F}_{ij} \in \CO$ and $f_i \in \C^{\infty}(\RE)$. We have, of course, sing supp $F
\subseteq { I}$.
\end{theorem}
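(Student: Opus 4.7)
The statement is an equivalence, and I would prove each direction constructively.

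For the sufficiency direction, the goal is to realize any $F = f + \Delta$ of the stated form as $D^N G$ for some $G \in \C^\infty_p$. The atomic building blocks are $f_i \chi_{\Omega_i} \in \C^\infty_p = D^0[\C^\infty_p]$ and $\delta^{(j)}(x - x_i) = D^{j+1} H(x - x_i) \in D^{j+1}[\C^\infty_p]$. To combine all of these pieces into a single derivative of a single piecewise smooth function, I would establish the closure lemma that $\C^\infty_p$ is stable under antidifferentiation with controlled singular support: if $g \in \C^\infty_p$ has singular support in $I$, then one can construct a piecewise smooth primitive $G \in \C^\infty_p$ with singular support still in $I$, by taking a $\C^\infty(\RE)$ antiderivative $G_i$ of each smooth piece $g_i$ and then recursively fixing additive constants on successive pieces to ensure global continuity (so that no spurious delta terms appear in $DG$). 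Iterating this lemma raises every term in $F$ to a common derivative order $N \geq n+1$, and the sum of the corresponding integrands then yields the desired $G$.

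For the necessity direction, let $F = D^k g$ with $g \in \C^\infty_p$, and proceed by induction on $k$. The base case $k = 0$ is immediate since $g$ already has the stated form with $\Delta = 0$. For the inductive step, write $D^{k-1} g = \wt f + \wt \Delta$ by the induction hypothesis and apply one more $D$. The image $D \wt \Delta$ is manifestly another finite sum of derivatives of Dirac deltas supported in $I$, and for the regular part the standard distributional identity
$$D\wt f = \sum_{i=0}^m \wt f_i' \chi_{\Omega_i} + \sum_{i=1}^m \bigl[\wt f_i(x_i) - \wt f_{i-1}(x_i)\bigr] \delta(x - x_i)$$
already produces a decomposition into a piecewise smooth part and a finite sum of deltas at points of $I$. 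Adding the two contributions yields $F$ in the claimed form and closes the induction.

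The main technical obstacle is the closure lemma used in the sufficiency direction. The subtlety is that the paper's definition of $\C^\infty_p$ requires each smooth piece to admit a $\C^\infty(\RE)$ extension, not merely to be smooth on the open interval $\Omega_i$, so the recursive constant adjustment must preserve this stronger regularity rather than just continuity at the interface points. Once this is verified, the containment sing supp $F \subseteq I$ is automatic from the explicit representation (\ref{FormF})--(\ref{FormF2}).
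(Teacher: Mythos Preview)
The paper does not actually supply a proof of this theorem: it is stated in Section~\ref{pri} as part of the review of results from \cite{DP09,DPJ16}, with the explicit disclaimer that ``the reader should refer to \cite{DP09,DPJ16} for details and proofs of the main results.'' So there is nothing in the paper to compare your argument against directly.

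That said, your outline is sound and is essentially the natural proof of this characterization. Both directions are correct. A couple of remarks:

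For the necessity direction, your base case $k=0$ tacitly uses that a function in $\C^\infty_p$ restricted to each $\Omega_i$ extends to an element of $\C^\infty(\RE)$. This follows from the paper's definition of $\C^\infty_p$ (all one-sided limits of all derivatives exist and are finite) via a standard Borel/Whitney extension argument, but it is worth saying so explicitly since the theorem's statement really does require $f_i \in \C^\infty(\RE)$. The inductive step and the jump formula you wrote are fine.

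For the sufficiency direction, your closure lemma is correct and the ``subtlety'' you flag is in fact harmless: once each piece $g_i$ is already assumed to lie in $\C^\infty(\RE)$, its antiderivative $G_i$ is automatically in $\C^\infty(\RE)$, and adding a constant does not disturb that. So the recursive constant-matching genuinely produces $G \in \C^\infty_p$ with $DG = g$ and no spurious deltas, and the nesting $D^k[\C^\infty_p] \subseteq D^{k+1}[\C^\infty_p]$ follows. This is exactly what one needs to combine the building blocks $f_i\chi_{\Omega_i}$ and $\delta^{(j)}(x-x_i)$ into a single $D^N G$.
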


Notice, in particular, that every $F \in  \A_{\left\{0\right\}}$ can be
written in the form:
\begin{equation}\label{0}
F=H_-f_-+H f_+ +\Delta,
\end{equation}
where $H$ is the Heaviside step function, $H_- = 1-H$, $f_\pm \in \C^{\infty}(\RE)$ and $\text{supp
}\Delta\subseteq\left\{0\right\}$.

More generally, we have from Theorem \ref{f6}:

\begin{theorem}\label{18}
If ${ F} \in\A$ then there exist $\epsilon>0$ and ${f_-} ,{ f_+}
\in \C^{\infty}(\RE)$, such that:
\begin{equation} \label{Proj}
{ F} |_{\left(-\epsilon,0\right)} = {f} _- \chi_{\left(-\epsilon,0\right)} \quad \mbox{and } \quad
{F} |_{\left(0,\epsilon\right)}={ f} _+ \chi_{\left(0,\epsilon\right)} \, .
\end{equation}
\end{theorem}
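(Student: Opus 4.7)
The plan is to derive this as a near-immediate consequence of the structural decomposition provided by Theorem \ref{f6}. Concretely, given $F \in \A$, I would first invoke Theorem \ref{f6} to write
\[
F = f + \Delta, \qquad f = \sum_{i=0}^m f_i\,\chi_{\Omega_i}, \qquad \Delta = \sum_{i=1}^m \sum_{j=0}^n F_{ij}\,\delta^{(j)}(x - x_i),
\]
where $I = \{x_1 < \cdots < x_m\}$, the intervals $\Omega_i = (x_i, x_{i+1})$ (with $x_0 = -\infty$, $x_{m+1} = +\infty$) partition $\RE \setminus I$, and each $f_i \in \C^\infty(\RE)$.

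Next, since $I$ is finite I would choose $\epsilon > 0$ small enough that
\[
(-\epsilon,\epsilon) \cap \bigl(I \setminus \{0\}\bigr) = \emptyset.
\]
Because $\operatorname{supp} \Delta \subseteq I$, the restrictions $\Delta|_{(-\epsilon,0)}$ and $\Delta|_{(0,\epsilon)}$ both vanish, so the lateral behavior of $F$ near $0$ is governed entirely by $f$. I would then split into two cases. If $0 \notin I$, then $(-\epsilon,\epsilon) \subset \Omega_k$ for the unique index $k$ with $0 \in \Omega_k$ (shrinking $\epsilon$ if necessary), and I set $f_- = f_+ = f_k$. If $0 \in I$, say $0 = x_k$, then by shrinking $\epsilon$ if needed I have $(-\epsilon,0) \subset \Omega_{k-1}$ and $(0,\epsilon) \subset \Omega_k$, so I set $f_- = f_{k-1}$ and $f_+ = f_k$.

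In either case the equalities
\[
F|_{(-\epsilon,0)} = f_-\,\chi_{(-\epsilon,0)}, \qquad F|_{(0,\epsilon)} = f_+\,\chi_{(0,\epsilon)}
\]
follow directly, with $f_\pm \in \C^\infty(\RE)$ as required. There is essentially no obstacle here: the statement is a reformulation of the decomposition in Theorem \ref{f6} restricted to small one-sided neighborhoods of the origin. The only point to be careful about is choosing $\epsilon$ uniformly small enough to isolate $0$ from the remaining singularities of $F$, which is possible because $I$ is finite.
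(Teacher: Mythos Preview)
Your proposal is correct and follows essentially the same approach as the paper: both invoke Theorem~\ref{f6}, use finiteness of $I$ to find $\epsilon>0$ with $I\cap(-\epsilon,0)=I\cap(0,\epsilon)=\emptyset$, and then read off $f_\pm$ from the smooth pieces $f_i$ on the intervals containing $(-\epsilon,0)$ and $(0,\epsilon)$. The only cosmetic difference is that you split explicitly into the cases $0\in I$ and $0\notin I$, whereas the paper handles both at once by simply asserting $(-\epsilon,0)\subseteq\Omega_i$ and $(0,\epsilon)\subseteq\Omega_j$ for some (possibly equal) indices $i,j$.
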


\begin{proof}
If ${F} \in\A$ then by Theorem \ref{f6} there is a finite set of real numbers $I$ such that
${ F} ={f} +\Delta$ where $\Delta$ is a distribution with support on ${I}$
and ${f} \in \C^{\infty}_p\cap\C^{\infty}(\RE \backslash { I})$. Since ${
I}$ is a finite set, there is always a real number $\epsilon>0$ such that ${
I}\cap\left(-\epsilon,0\right)=\emptyset$ and ${I}\cap\left(0,\epsilon\right)=\emptyset$. Hence,
$(-\epsilon , 0) \subseteq \Omega_i$ and $(0, \epsilon) \subseteq \Omega_j$ for some $i,j \in \{0,..,m\}$ (cf.
Theorem \ref{f6}) and thus eq.(\ref{Proj}) holds for ${f} _- = f_i$ and ${ f} _+ = f_j$,
which concludes the proof.
\end{proof}

We now recall some basic definitions about products of distributions. The dual product of $F \in \DO'$ by $g \in
\C^\infty$ is defined by:
\begin{equation} \label{dual}
\langle F g, h \rangle= \langle F, gh \rangle \quad , \quad \forall \,h \in \DO \, .
\end{equation}
This product can be generalized to the
case of two distributions with finite and disjoint singular
supports [pag.55, \cite{Hor83}]. In $\A$ this generalization 
can be defined as follows:

\begin{definition} \label {Hormander}
Let $F,G \in \A$ be two distributions such that sing supp $F$ and sing supp $G$ are finite disjoint sets. Then
there exists a finite open cover of $\RE$ (denote it by $\{\Omega_i \subset \RE,\, i=1,..,d \}$) such that, on
each open set $\Omega_i$, either $F$ or $G$ is a $\C^\infty(\Omega_i)$-function. Hence, on each $\Omega_i$, the
two distributions can be multiplied using the dual product. The product $\cdot$ of $F$ and $G$ is then
defined as the unique distribution $F \cdot G \in \A$ that satisfies:
\begin{equation}\label{00}
F \cdot G|_{\Omega_i}= F|_{\Omega_i} G|_{\Omega_i} \quad , \quad  i=1,..,d.
\end{equation}
\end{definition}

The new product $*$ extends the previous product to the case of an arbitrary pair of distributions in
$\A$:

\begin{definition} \label{Productdef}
Let $F,G\in\A$. The multiplicative product $*$  is defined  by
\begin{eqnarray}\label{55}
F*G=\mathop {\lim }\limits_{\varepsilon  \downarrow 0}F(x) \cdot G(x+\epsilon)
\end{eqnarray}
 where the product $F(x)\cdot G(x+\epsilon)$ is given in the previous definition and the limit is taken in the distributional sense.
\end{definition}

Notice that for sufficiently small $\epsilon >0$, $F(x)$ and $G(x+\epsilon)$ have disjoint singular supports,
hence the $\cdot $ product in (\ref{55}) is well-defined in the sense of Definition (\ref{Hormander}).

The next theorem gives an explicit formula for $F*G$. Let $F,G \in \A$, let sing supp $F \, \cup$ sing supp $G
=\{x_1<..<x_m\}$, and consider the  associated set of open intervals $\Omega_i=(x_i,x_{i+1})$, $i=0,..,m$ (with
$x_0=-\infty$ and $x_{m+1}=+\infty$). Then, in view of Theorem \ref{f6}, $F$ and $G$ can be written in the form:
\begin{eqnarray}\label{187}
F &=& \sum_{i=0}^m f_i \chi_{\Omega_i}+\sum_{i=1}^m \sum_{j=0}^n {F} _{ij}\delta^{(j)}(x-x_i)   \nonumber \\
G &=& \sum_{i=0}^m g_i \chi_{\Omega_i}+\sum_{i=1}^m \sum_{j=0}^n { G} _{ij}\delta^{(j)}(x-x_i)
\end{eqnarray}
where $f_i,g_i \in \C^\infty$ and ${ F} _{ij}=0$ if $x_i \notin \text{sing supp F}$ or if $j \ge$ ord
$F$, and likewise for $G$.  Then
\begin{theorem}\label{2.5}
Let $F,G \in \A$ be written in the form (\ref{187}). Then $F*G$ is given explicitly by
\begin{equation} \label{prodf}
F * G = \sum_{i=0}^m f_i g_i \chi_{\Omega_i}+\sum_{i=1}^m \sum_{j=0}^n \left[ { F} _{ij} g_i  +
{ G} _{ij} f_{i-1} \right]  \delta^{(j)}(x-x_i).
\end{equation}
and so $F*G \in \A$.
\end{theorem}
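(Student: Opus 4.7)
The plan is to exploit the bilinearity of the H\"ormander dual product on disjoint singular supports: decompose $F$ and $G$ each into a piecewise-smooth part and a purely singular part, expand $F(x)\cdot G(x+\epsilon)$ into four cross-terms, and take the $\epsilon\downarrow 0$ limit in each. By Theorem \ref{f6} I write
$$F = f + \Delta_F, \qquad G = g + \Delta_G,$$
with $f = \sum_i f_i\chi_{\Omega_i}$, $\Delta_F = \sum_{i,j} F_{ij}\delta^{(j)}(x - x_i)$, and analogous expressions for $g,\Delta_G$. For $\epsilon>0$ smaller than $\min_i(x_{i+1}-x_i)$, the sets $\mbox{sing supp }\Delta_F = \{x_i\}$ and $\mbox{sing supp }\Delta_G(\cdot+\epsilon) = \{x_i-\epsilon\}$ are disjoint, so Definition \ref{Hormander} applies bilinearly and gives
$$F(x)\cdot G(x+\epsilon) = f(x)g(x+\epsilon) + f(x)\Delta_G(x+\epsilon) + \Delta_F(x)g(x+\epsilon) + \Delta_F(x)\Delta_G(x+\epsilon).$$

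I would then evaluate each term via the dual product on a suitable open cover of $\RE$ and pass to the distributional limit. The regular-regular term $f(x)g(x+\epsilon)$ is a piecewise smooth function equal to $f_i(x)g_i(x+\epsilon)$ on the bulk of $\Omega_i$ and to $f_{i-1}(x)g_i(x+\epsilon)$ only on the shrinking sliver $(x_i-\epsilon, x_i)$; uniform convergence on compact subsets away from $I$ yields the limit $\sum_i f_ig_i\chi_{\Omega_i}$. For $\Delta_F(x)g(x+\epsilon)$, near each $x_i$ the shifted function $g(\cdot+\epsilon)$ is smooth and coincides with $g_i(\cdot+\epsilon)$, so the dual product gives $\sum_{i,j} F_{ij}\,g_i(x+\epsilon)\,\delta^{(j)}(x-x_i)$, whose distributional limit is $\sum_{i,j} F_{ij}g_i\delta^{(j)}(x-x_i)$ by continuity of translation and of the smooth-times-delta product. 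Symmetrically, near each shifted point $x_i-\epsilon$ the function $f$ is smooth and equal to $f_{i-1}$, so $f(x)\Delta_G(x+\epsilon) = \sum_{i,j} G_{ij}f_{i-1}(x)\,\delta^{(j)}(x-(x_i-\epsilon))$, converging to $\sum_{i,j} G_{ij}f_{i-1}\delta^{(j)}(x-x_i)$. Finally, $\Delta_F(x)\Delta_G(x+\epsilon)$ is the zero distribution: on any open neighbourhood of $x_i$ that avoids every $x_\ell-\epsilon$, $\Delta_G(\cdot+\epsilon)$ vanishes (and conversely near each $x_\ell-\epsilon$), so Definition \ref{Hormander} returns $0$.

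Summing the four limits yields exactly formula (\ref{prodf}), and the membership $F*G\in\A$ is immediate from Theorem \ref{f6}, because the right-hand side is a piecewise smooth function plus a distribution with finite support. The step that most needs care is the asymmetric pairing $F_{ij}\leftrightarrow g_i$ versus $G_{ij}\leftrightarrow f_{i-1}$: it arises because the shift $G(\cdot+\epsilon)$ places $\mbox{sing supp }\Delta_G$ to the \emph{left} of $x_i$, where $F$ is smooth and equal to its $\Omega_{i-1}$-side extension $f_{i-1}$, whereas at $x_i$ itself it is $G(\cdot+\epsilon)$ that is smooth, seen from its $\Omega_i$-side restriction $g_i$. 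Matching sides correctly is the one point I would slow down on; it is also precisely what produces the non-commutativity of $*$.
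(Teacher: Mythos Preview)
Your proof is correct and is essentially the natural argument from the definitions. Note, however, that the paper does not actually prove Theorem \ref{2.5}: Section \ref{pri} is a review section, and the authors explicitly refer the reader to \cite{DP09,DPJ16} ``for details and proofs of the main results.'' So there is no proof in the paper to compare against; your argument is presumably close in spirit to the original one in \cite{DP09}, since the four-term bilinear expansion you use is the obvious route once Definition \ref{Productdef} and Definition \ref{Hormander} are in hand.

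One minor point of presentation: for the regular--regular term $f(x)g(x+\epsilon)$, ``uniform convergence on compact subsets away from $I$'' is not quite the justification for distributional convergence, since the contribution of the slivers $(x_i-\epsilon,x_i)$ must still be controlled when paired with a test function whose support meets $I$. The clean argument is dominated convergence: $f(\cdot)g(\cdot+\epsilon)$ is locally bounded uniformly in $\epsilon$, and converges pointwise a.e.\ to $\sum_i f_ig_i\chi_{\Omega_i}$, so the pairing with any $\phi\in\DO$ converges. This is a cosmetic fix; your identification of the asymmetric pairing $F_{ij}\leftrightarrow g_i$ versus $G_{ij}\leftrightarrow f_{i-1}$ --- which you correctly flag as the key point --- is exactly right.
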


Some important examples are:

$$
\delta^{(k)} (x) * H(x)= \delta^{(k)} (x)  \quad \mbox {and}  \quad H(x) * \delta^{(k)} (x)= 0 \, , \quad
\forall k \in \N_0
$$
$$
\delta^{(k)}(x) * \delta^{\mathbin{(}l)}(x) =0 \, , \quad \forall k,l \in \N_0 \, .
$$

Notice that the $*$ product of two distributions does not always display the expected symmetry. We have, for instance:
$$
\delta (x) * (-\frac{1}{2} + H(x)) = \frac{1}{2} \delta (x)
$$
and thus, in this case, the product of an even distribution by an odd one yields an even result. Hence the product $*$, which is an extension of the dual product, does not preserve some of the symmetry properties of the dual product. Of course, if $F \in \DO$ is odd then $\delta *F=0$. This is not in general the case for $F\in \A$, because when we compute $\delta(x) * F(x)$ we are in fact calculating $\delta(x) * F(x+\epsilon)$, i.e. the product with an infinitesimally {\it shifted} distribution $F_\epsilon(x)=F(x+\epsilon)$ and this, in general, destroys the even/odd symmetry, unless $F$ is smooth. We remark, however, that by using the product $*$ we can easily generate an extension of the dual product that preserves the symmetry properties. For instance, we always have for odd $F \in A$:
$$
\delta(x) * F + F* \delta (x) =0 \, .
$$
in agreement with the standard property.

Other important properties of the product $*$ are summarized in the following theorem:

\begin{theorem} \label{impo}
The product $*$ is an inner operation in $\A$, it is associative, distributive, noncommutative and it reproduces
the H\"ormander product of distributions with disjoint singular supports (and thus the dual product of smooth
functions). In $\A$, the distributional derivative $D$ is an inner operator and satisfies the Leibniz rule with
respect to the product $*$.
\end{theorem}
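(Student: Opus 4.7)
The plan is to verify each assertion of Theorem \ref{impo} by combining the explicit formula (\ref{prodf}) with the limit definition (\ref{55}), handling the easy properties first and isolating associativity as the one genuinely technical point.

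That $*$ is an inner operation on $\A$ is immediate from (\ref{prodf}): the right-hand side is a finite sum of terms of the types covered by Theorem \ref{f6}. Distributivity follows from the bilinearity of the Hörmander product $\cdot$ together with the linearity of the distributional limit in (\ref{55}). Noncommutativity is witnessed by the examples $\delta * H = \delta$ versus $H * \delta = 0$ already displayed in the text. To show that $*$ reduces to the Hörmander product when $F,G \in \A$ have disjoint singular supports, I would observe that $F(x) \cdot G(x+\epsilon)$ is already well defined in a full neighbourhood of $\epsilon = 0$: on each open set of the finite cover provided by Definition \ref{Hormander} one factor is smooth, and translation is continuous in $\DO'$, so the dependence on $\epsilon$ is $\DO'$-continuous. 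Consequently $\lim_{\epsilon \downarrow 0} F(x) \cdot G(x+\epsilon) = F \cdot G$.

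For the statements about $D$: closure of $\A$ under $D$ is built into Definition \ref{c}, since $\A = \cup_{k} D^k[\C^\infty_p]$. For the Leibniz rule I would exploit the limit definition: for each $\epsilon > 0$ the distributions $F(x)$ and $G(x+\epsilon)$ have disjoint singular supports, and the Hörmander product is known to satisfy Leibniz, so
\[
D\bigl(F(x) \cdot G(x+\epsilon)\bigr) = DF(x) \cdot G(x+\epsilon) + F(x) \cdot DG(x+\epsilon).
\]
Since $D$ is continuous on $\DO'$, letting $\epsilon \downarrow 0$ and invoking Definition \ref{Productdef} yields $D(F*G) = DF * G + F * DG$.

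The main obstacle is associativity $(F*G)*K = F*(G*K)$. Iterating (\ref{prodf}) shows that both triple products have the same smooth part $f_i g_i k_i$ on each $\Omega_i$, so the question reduces to matching the $\delta^{(j)}(x-x_i)$ coefficients; these are built from $F_{ij},G_{ij},K_{ij}$ paired with the Taylor coefficients at $x_i$ of $f_{i-1}, g_{i-1}, g_i, k_i$, and the bookkeeping must be checked order by order in $j$. A cleaner route would be to represent both triple products as iterated $\DO'$-limits of $F(x)\cdot G(x+\epsilon_1) \cdot K(x+\epsilon_1+\epsilon_2)$, where for strictly positive shifts the three factors have pairwise disjoint singular supports so that the Hörmander triple product is unambiguously associative; the residual task is then to justify that the two iterated limits, taken in the two different orders corresponding to $(F*G)*K$ and $F*(G*K)$, converge to the same distribution. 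This interchange-of-limits step is the true technical content of the theorem; once it is in place, all remaining claims of Theorem \ref{impo} are essentially corollaries of the explicit formula and the continuity of the limit.
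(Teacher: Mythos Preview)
The paper does not actually prove Theorem \ref{impo}: it is stated in the preliminaries section as a summary of results established elsewhere, and the opening of Section \ref{pri} explicitly refers the reader to \cite{DP09,DPJ16} for the proofs. There is therefore no in-text argument to compare your proposal against.

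As an independent sketch your outline is sound for the easy items --- inner, distributive, noncommutative, reduction to the H\"ormander product, $D$ inner, and Leibniz all follow as you indicate from (\ref{prodf}), (\ref{55}), and continuity of $D$ on $\DO'$. You correctly isolate associativity as the one nontrivial assertion, but you do not actually prove it: both routes you propose (direct coefficient-matching via (\ref{prodf}), or an interchange of iterated $\epsilon$-limits) are left as programmes. In particular, for the limit route you would need to show that $F(x)\cdot G(x+\epsilon_1)\cdot K(x+\epsilon_1+\epsilon_2)$ converges to the same distribution regardless of the order in which $\epsilon_1,\epsilon_2 \downarrow 0$, and this interchange is not automatic. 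So your proposal is a correct roadmap rather than a completed proof; to close it you would have to either carry out the $\delta^{(j)}$-coefficient bookkeeping in full or supply the uniformity estimate that justifies the limit interchange.
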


We conclude that the space $\A$, endowed with the product $*$, is
an associative (but noncommutative) differential algebra of
distributions. We have recently proved \cite{DJP16-2}
that it is essentially the unique differential algebra of
distributions that contains $\C^\infty_p$ and satisfies all the
properties stated in the Schwartz impossibility result
\cite{Sch54}.

\subsection{Delta operators}

Using the product $*$ we can define the following operators:

\begin{definition}\label{2}
Let $x_0\in\RE$ and $n\in\N_0$. The $n$th-order "right shifting delta" is the operator
$$
\widehat \delta _ +^{(n)}({x_0}):\A\longrightarrow\A ; \, \widehat \delta _ + ^{(n)}({x_0}){F} = \delta
^{(n)}({x-x_0})*{ F}
$$
and the $n$th-order "left shifting delta" is the operator
$$
\widehat \delta _ - ^{(n)}({x_0}):\A\longrightarrow\A; \, \widehat\delta _ - ^{(n)}({x_0}){ F} = { F} *\delta
^{(n)}({x-x_0}) \, .
$$
For $n=0$ we write only $\widehat \delta _+({x_0})$ and  $\widehat \delta _-({x_0})$; for $x_0=0$ we write
$\widehat \delta _ + ^{(n)}$ and $\widehat \delta _ - ^{(n)}$.

We also define the operators:
\begin{equation} \label{Gamma1}
\widehat \Gamma^{(n)}(x_0):\A \longrightarrow\A; \, \widehat\Gamma^{(n)}(x_0) { F} =\left[ \widehat \delta_
-^{(n)}(x_0)- \widehat \delta_ +^{(n)} (x_0)\right]{ F}
\end{equation}
For $n=0$ and $x_0=0$ we write $\widehat\Gamma(x_0)$ and $\widehat\Gamma^{(n)}$, respectively.
\end{definition}

Let us also consider the standard operator of "multiplication by
the $n$th-order derivative of a Dirac delta" ($n\in \N_0$)
$$
\widehat\delta^{(n)}(x_0): \C^\infty \longrightarrow \DO'; \, \widehat\delta^{(n)}(x_0) f=
\delta^{(n)}(x-x_0) f
$$
where the product $\delta^{(n)}(x-x_0) f$ is the dual product (\ref{dual})
$$
\langle \delta^{(n)}(x-x_0) f,g \rangle = (-1)^n \frac{\partial^n}{\partial x^n}(f \, g)(x_0) \, , \quad
\forall g \in \DO \, .
$$

Notice that the operators $\widehat \delta _\pm^{(n)}({x_0})$ are extensions of $\widehat \delta ^{(n)}({x_0})$
to the space $\A \supset\C^\infty$:
$$
\widehat \delta _ - ^{(n)}({x_0})f=\widehat \delta _ + ^{(n)}({x_0})f=\widehat \delta ^{(n)}({x_0})f\, , \quad
\forall f\in\C^\infty \, .
$$

Moreover, $\wh \delta_\pm (x_0)$ are the weak operator limits of large classes of sequences of concentrated
smooth potentials. Let us state this property precisely for the simplest case $x_0=0$. For every $\epsilon
> 0$, let $v_\epsilon \in \DO$ be a non-negative, even function such that supp $v_\epsilon \subseteq [-\epsilon
, \epsilon]$  and $ \int_{-\infty}^\infty \, v_\epsilon (x) \, d x =1$. Since
$$
\lim_{\epsilon \downarrow 0} \langle v_{\epsilon},g \rangle =g(0) \, , \quad \forall g \in \DO
$$
we have, in the sense of distributions, $\lim_{\epsilon \downarrow 0}  v_\epsilon (x) = \delta (x) $. Next,
define the operators:
$$
\wh v_{\pm\epsilon}^{(n)}: \A \longrightarrow \A ; \, \wh v_{\pm\epsilon}^{(n)} F(x) = v_\epsilon^{(n)}
(x\mp\epsilon)  F(x) \, ,
$$
where the product {$v_\epsilon^{(n)} (x\mp\epsilon)  F(x)$} is the dual product. In the distributional sense we have, once again, $\lim_{\epsilon
\downarrow 0} v_\epsilon^{(n)} (x \mp \epsilon) = \delta^{(n)} (x) $. On the other hand, in the weak operator
sense:

\begin{theorem}
For all $n \in \N_0$, the one parameter families $\left(\widehat v_{\pm\epsilon}^{(n)}\right)_{(\epsilon >0)}$
converge, in the weak operator sense, to the operators $\widehat\delta_\pm^{(n)}$, i.e.
$$
w-\lim_{\epsilon \downarrow 0} \widehat v_{\pm\epsilon}^{(n)} = \widehat\delta_\pm^{(n)}.
$$
\end{theorem}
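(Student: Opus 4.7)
The plan is to verify the weak operator convergence directly at every $F \in \A$ and every $g \in \DO$; by linearity and the finite-sum decomposition of Theorem \ref{f6}, it suffices to check $\langle \wh v_{+\epsilon}^{(n)} F, g \rangle \to \langle \wh \delta_+^{(n)} F, g \rangle$ on three classes of building blocks: (a) $F = H f_+$ or $F = H_- f_-$ with $f_\pm \in \C^\infty$; (b) $F = \delta^{(k)}(x)$ at the origin; (c) $F = f_i \chi_{\Omega_i}$ or $F = \delta^{(k)}(x-x_i)$ with support bounded away from $0$. The $\wh v_{-\epsilon}^{(n)} \to \wh \delta_-^{(n)}$ case then follows by the mirror-image argument, interchanging $\epsilon \leftrightarrow -\epsilon$ and $H \leftrightarrow H_-$.

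The decisive support facts are that $\mathrm{supp}\, v_\epsilon^{(n)}(\cdot-\epsilon) \subseteq [0, 2\epsilon]$, and that $v_\epsilon \in \DO$ with $\mathrm{supp}\, v_\epsilon \subseteq [-\epsilon,\epsilon]$ forces $v_\epsilon^{(k)}(\pm\epsilon) = 0$ for every $k\ge 0$. These immediately settle every case except (a) with the right-hand smooth part. In case (c) the supports become disjoint for small $\epsilon$, so the dual product is zero; in case (a) with $H_- f_-$, the integrand $v_\epsilon^{(n)}(x-\epsilon) H_-(x) f_-(x) g(x)$ vanishes identically; in case (b) the dual product pairing evaluates to $(-1)^k \sum_{j=0}^k \binom{k}{j} v_\epsilon^{(n+j)}(-\epsilon) g^{(k-j)}(0)=0$. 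The corresponding limit objects vanish by Theorem \ref{2.5} and the explicit examples already recorded: $\delta^{(n)} * \delta^{(k)}(x-x_i) = 0$ for all $x_i$, and $\delta^{(n)} * H_- f_-$ has zero coefficient on the single delta term produced by the formula.

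The substantive case is $F = H f_+$. Here I would change variables $y = x-\epsilon$ and integrate by parts $n$ times, with all boundary terms vanishing by the above, yielding
$$\langle \wh v_{+\epsilon}^{(n)} F, g \rangle = (-1)^n \int_{-\epsilon}^{\epsilon} v_\epsilon(y)\, \frac{d^n}{dy^n}\bigl[f_+(y+\epsilon) g(y+\epsilon)\bigr]\, dy.$$
Because $v_\epsilon \to \delta$ in $\DO'$ and $(f_+g)^{(n)}$ is continuous, the right-hand side tends to $(-1)^n (f_+g)^{(n)}(0)$. On the other side, Theorem \ref{2.5} evaluates $\delta^{(n)}(x) * H(x) f_+(x) = f_+(x)\delta^{(n)}(x)$ (the dual product of a smooth function with a delta derivative), whose action on $g$ is precisely $(-1)^n (f_+ g)^{(n)}(0)$, so the two limits match.

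The main obstacle is the careful bookkeeping that joins all the pieces: confirming that the coefficient formula in Theorem \ref{2.5} gives the correct value for $\delta^{(n)} * G$ on each building block, and that the decomposition of a general $F \in \A$ into the three classes above does not double-count singular contributions at any $x_i$. Once these compatibilities are verified, linearity and the finite-sum structure of $F$ close the argument for both signs.
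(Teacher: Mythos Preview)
Your proposal is correct. Note, however, that the paper does not actually give a proof of this statement: it simply cites Theorem~3.3 of \cite{DPJ16}, so there is no in-paper argument to compare against. Your direct verification---using the decomposition of Theorem~\ref{f6}, the support observation $\mathrm{supp}\,v_\epsilon^{(n)}(\cdot-\epsilon)\subseteq[0,2\epsilon]$ together with $v_\epsilon^{(k)}(\pm\epsilon)=0$, and integration by parts in the single substantive case $F=Hf_+$---is a complete and self-contained argument. One small point of bookkeeping worth making explicit: when $0\notin\mathrm{sing\,supp}\,F$, no building block of type (a) or (b) appears at the origin in the decomposition of Theorem~\ref{f6} as stated; the cleanest fix is to enlarge the set $I$ in that theorem to include $0$ (which is harmless), so that the piece $f_i\chi_{\Omega_i}$ containing $0$ splits into an $H_-f_-$ and an $Hf_+$ contribution, both covered by your case (a).
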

The proof is given in [Theorem 3.3, \cite{DPJ16}]. To finish this
section, we prove a simple result that will be used in the sequel.

\begin{theorem}\label{hhh1}
Let ${F} \in \A$. Then, in view of Theorem \ref{18}, there exist
${ f} _-,{ f} _+ \in \C^{\infty}(\RE)$, and $\epsilon
>0$ such that ${ F} |_{(-\epsilon,0)}={f} _- \chi_{(-\epsilon,0)}$ and ${ F} |_{(0,\epsilon)}={ f} _+
\chi_{(0,\epsilon)}$. We then have (for all $j,n \in \N_0$):
\begin{eqnarray}\label{hj}
       \widehat\delta^{(n)} _ \pm D^{j}{F}  = \sum_{k = 0}^n {{( - 1)}^{k+n}} \left( \begin{gathered}
  n \hfill \\
  k \hfill \\
\end{gathered}  \right)\delta^{(k)} (x){D^{n-k+j}{ f} _ \pm }(0)
\, .
\end{eqnarray}
\end{theorem}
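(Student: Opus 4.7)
The plan is to reduce everything to the standard dual-product expansion of $\phi(x)\delta^{(n)}(x)$ via the defining limit for $*$, exploiting the fact that after an infinitesimal shift the relevant factor becomes smooth near the singular support of the delta derivative.

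First, I would unpack the operators using Definition \ref{Productdef}:
$$
\widehat\delta^{(n)}_+ D^j F = \delta^{(n)}(x) * D^j F = \lim_{\epsilon\downarrow 0}\,\delta^{(n)}(x)\cdot D^j F(x+\epsilon),
$$
$$
\widehat\delta^{(n)}_- D^j F = D^j F * \delta^{(n)}(x) = \lim_{\epsilon\downarrow 0}\,D^j F(x)\cdot \delta^{(n)}(x+\epsilon),
$$
where $\cdot$ is the H\"ormander product of Definition \ref{Hormander}. The point is that the singular supports of the two factors in each H\"ormander product are disjoint for small $\epsilon>0$, and in fact one of the factors is smooth in a neighbourhood of the other's singular support.

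Next I would use Theorem \ref{18} to localise. For the right-shifting case, the singular support of $\delta^{(n)}(x)$ is $\{0\}$, and for $\epsilon\in(0,\epsilon_0)$ sufficiently small, $D^j F(x+\epsilon) = D^j f_+(x+\epsilon)$ on a neighbourhood of $0$, since $F|_{(0,\epsilon_0)} = f_+\chi_{(0,\epsilon_0)}$ and differentiation preserves this on the open interval. Hence on an open set $\Omega_2\ni 0$ the H\"ormander product equals the dual product $\delta^{(n)}(x)\,D^j f_+(x+\epsilon)$, and on $\Omega_1 = \RE\setminus\{0\}$ the factor $\delta^{(n)}(x)$ vanishes; by uniqueness in Definition \ref{Hormander}, the global product is exactly the dual product $\delta^{(n)}(x)\,D^j f_+(x+\epsilon)$, which is a well-defined element of $\A$. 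An analogous localisation near $x=-\epsilon$ gives $D^j F(x)\cdot\delta^{(n)}(x+\epsilon) = D^j f_-(x)\,\delta^{(n)}(x+\epsilon)$ for the left-shifting case, since $F|_{(-\epsilon_0,0)}=f_-\chi_{(-\epsilon_0,0)}$.

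Then I would invoke the classical expansion of the dual product $\phi(x)\,\delta^{(n)}(x-a)$ for $\phi\in\C^\infty$, namely
$$
\phi(x)\,\delta^{(n)}(x-a) \;=\; \sum_{k=0}^n (-1)^{n-k}\binom{n}{k}\,\phi^{(n-k)}(a)\,\delta^{(k)}(x-a),
$$
which is obtained directly from the Leibniz rule applied to the definition of $\delta^{(n)}$ as a functional. Applying this with $\phi(x)=D^j f_+(x+\epsilon)$ and $a=0$ gives $\phi^{(n-k)}(0)=D^{n-k+j}f_+(\epsilon)$, and applying it with $\phi(x)=D^j f_-(x)$ and $a=-\epsilon$ gives $\phi^{(n-k)}(-\epsilon)=D^{n-k+j}f_-(-\epsilon)$. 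Taking $\epsilon\downarrow 0$, the smooth coefficients converge to $D^{n-k+j}f_\pm(0)$ and the shifted delta derivatives converge in $\DO'$ to $\delta^{(k)}(x)$, yielding (\ref{hj}) after noting $(-1)^{n-k}=(-1)^{n+k}$.

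The only real obstacle is justifying that the H\"ormander product on the shifted objects coincides with the globally defined dual product by a smooth function; once this localisation step is secured, the rest is a routine application of the Leibniz rule and continuity of $f_\pm$ and its derivatives at $0$.
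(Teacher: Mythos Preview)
Your argument is correct and leads to the same endpoint as the paper --- the dual product $\delta^{(n)}(x)\,f_\pm^{(j)}$ followed by the classical Leibniz expansion --- but you arrive there by a different route. The paper simply invokes the explicit product formula (\ref{prodf}) of Theorem \ref{2.5}, which immediately gives $F^{(j)}*\delta^{(n)}(x)=\delta^{(n)}(x)\,f_-^{(j)}$ and $\delta^{(n)}(x)*F^{(j)}=\delta^{(n)}(x)\,f_+^{(j)}$ without any further work; the limit has already been absorbed into that theorem. You instead unwind Definition \ref{Productdef}, localise via Theorem \ref{18}, identify the H\"ormander product with a dual product, and then pass to the limit $\epsilon\downarrow 0$ by hand. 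Your route is more self-contained and makes the mechanism transparent, but it essentially re-proves the relevant special case of Theorem \ref{2.5}; the paper's proof is shorter precisely because it cashes in on that earlier result.
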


\begin{proof}
From Definition \ref{2} and using eq.(\ref{prodf}) we have:
$$
\widehat\delta^{(n)}_- { F} ^{(j)}= { F} ^{(j)} * \delta^{(n)}(x)= \delta^{(n)}(x) { f}^{(j)}_-
$$
and likewise:
$$
\widehat\delta^{(n)}_+ {F}^{(j)}=\delta^{(n)}(x) * {F}^{(j)} =\delta^{(n)}(x) { f}^{(j)}_+
$$
Since ${f}_\pm\in\C^\infty$, a standard result (cf. \cite
[p.36]{Kan98}) states that:
\begin{eqnarray*}
\delta^{(n)}(x) { f}^{(j)}_\pm = \sum_{k = 0}^n {{( - 1)}^{k+n}} \left(
\begin{gathered}
  n \hfill \\
  k \hfill \\
\end{gathered}  \right)\delta^{(k)} (x){D^{n-k+j}{f}_ \pm }(0)
\end{eqnarray*}
which concludes the proof.
 \end{proof}

\section{A new derivative operator} \label{new}

In this section we construct a new derivative operator which is a sort of "covariant derivative" for
discontinuous functions.

\begin{definition}
Let ${I} \subset \RE$ be a finite set. The derivative operator $\widetilde D_I$ is defined by:
\begin{equation}\label{pp}
\widetilde D_{I}:\A\longrightarrow\A; \, \widetilde D_{I} {F}=D {F}+ \sum_{x_0 \in {I}} \widehat\Gamma({x_0}) {F}
\end{equation}
where $\wh\Gamma (x_0)$ was defined in eq.(\ref{Gamma1}).\\
As usual, the higher order derivatives are given by:
$$ \widetilde D^n_{I} {F} = \widetilde D_{I}\left( \widetilde D^{n-1}_{I}
{F}\right) \, , \quad n \in \N \, .
$$
If ${I}=\left\{0\right\}$, we write only $\widetilde D$, instead of $ \widetilde
D_{\left\{0\right\}}$.
\end{definition}

Let us study the main properties of $\widetilde  D_{I}$.

\begin{theorem} \label{PDtilde}
The operator $ \widetilde  D_{I} :\A\rightarrow \A$ is linear, local and satisfies the Leibniz rule with
respect to the product $*$.
\end{theorem}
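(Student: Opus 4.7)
The plan is to verify the three claimed properties separately and, as a preliminary, to check that $\widetilde D_I$ is genuinely an inner operator on $\A$. The latter is immediate: $D:\A\to\A$ by Theorem \ref{impo}, and each $\widehat\delta_\pm(x_0):\A\to\A$ by Definition \ref{2}, so $\widehat\Gamma(x_0)$ and therefore $\widetilde D_I$ take $\A$ to $\A$. Linearity I would dispatch in one line: $D$ is linear, and each $\widehat\Gamma(x_0)$ is linear because left and right multiplication by $\delta(x-x_0)$ are linear by the distributivity of $*$ (Theorem \ref{impo}).

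For locality, I would use the characterization that $F|_\Omega = 0$ on an open $\Omega \subseteq \RE$ should imply $(\widetilde D_I F)|_\Omega = 0$. Since $D$ is local, $(DF)|_\Omega = 0$. For each $x_0 \in I$ I would apply Theorem \ref{hhh1} with $n=j=0$, which gives the crucial fact that $\widehat\Gamma(x_0) F = [f_-(x_0) - f_+(x_0)]\,\delta(x-x_0)$, where $f_\pm$ are the smooth representatives of $F$ on the two sides of $x_0$ furnished by Theorem \ref{18}. This distribution is supported at the single point $\{x_0\}$: if $x_0 \notin \Omega$ the restriction to $\Omega$ is trivially zero, and if $x_0 \in \Omega$ the hypothesis $F|_\Omega=0$ forces $f_\pm$ to vanish in a neighbourhood of $x_0$, so $f_\pm(x_0)=0$ and the contribution again vanishes.

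For the Leibniz rule $\widetilde D_I(F*G) = (\widetilde D_I F)*G + F*(\widetilde D_I G)$, I would split the claim into the $D$ part, which is Theorem \ref{impo}, and the $\widehat\Gamma(x_0)$ part. What remains is to show, for each $x_0 \in I$, that
$$\widehat\Gamma(x_0)(F*G) = \widehat\Gamma(x_0) F * G + F * \widehat\Gamma(x_0) G.$$
Expanding via the definition $\widehat\Gamma(x_0) F = F*\delta(x-x_0) - \delta(x-x_0)*F$ and invoking associativity of $*$, the right-hand side becomes
$$F*\delta(x-x_0)*G - \delta(x-x_0)*F*G + F*G*\delta(x-x_0) - F*\delta(x-x_0)*G,$$
and the two cross terms $\pm F*\delta(x-x_0)*G$ cancel, leaving exactly $F*G*\delta(x-x_0) - \delta(x-x_0)*F*G$, which is $\widehat\Gamma(x_0)(F*G)$ after one more application of associativity.

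I do not expect a real obstacle: all three properties reduce to bookkeeping once the algebraic properties of $*$ are at hand. The only mildly delicate points are, first, the use of Theorem \ref{hhh1} in the locality step to guarantee that $\widehat\Gamma(x_0) F$ contains no derivatives of $\delta(x-x_0)$ (so it is supported at $\{x_0\}$ and handled by pointwise values of $f_\pm$), and second, the Leibniz computation, where the cancellation of the cross terms $F*\delta(x-x_0)*G$ is driven by the associativity, rather than commutativity, of $*$; in particular the non-commutativity is precisely what makes $\widehat\Gamma(x_0)$ satisfy the derivation identity despite not itself being a standard derivative.
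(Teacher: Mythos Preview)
Your proposal is correct and follows essentially the same route as the paper: linearity is dispatched in the same one line, locality is obtained from Theorem \ref{hhh1} exactly as the paper does (the paper phrases it as $\operatorname{supp}\widehat\Gamma(x_0)F\subseteq\operatorname{supp}F$, while you spell out the two cases $x_0\in\Omega$ and $x_0\notin\Omega$), and the Leibniz computation is the same add--and--subtract of the cross term $F*\delta(x-x_0)*G$ using associativity and distributivity of $*$. The only organisational difference is that you isolate the identity $\widehat\Gamma(x_0)(F*G)=(\widehat\Gamma(x_0)F)*G+F*(\widehat\Gamma(x_0)G)$ as a separate lemma (an inner commutator is always a derivation of an associative algebra), whereas the paper carries $D$ and $\widehat\Gamma$ through the computation simultaneously; this is a matter of presentation rather than substance.
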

\begin{proof}
$\widetilde  D_{I}$ is linear because both $D$ and $\wh\Gamma(x_0)$ are linear operators. Likewise,
$ \widetilde D_{I}$  is local because both $D$ and $\wh\Gamma(x_0)$ are local, i.e. supp $D F
\subseteq $ supp $F$ and supp $\wh\Gamma(x_0)F \subseteq $ supp $F$ for all $F \in \A$ (the latter relation
follows easily from (\ref{Gamma1}) and (\ref{hj})).

Next we prove the Leibniz rule for ${I} =\{0\}$; the more general
case is proven exactly in the same way. Let ${F},{G}\in\A$. Then
${F}*{G}\in\A$ and
\begin{eqnarray*}
  \widetilde D ({F}*{G})&=&D ({F}*{G}) +  ({F}*{G}) *
\delta(x)-\delta(x) * ({F}*{G})\\
&=&  (D {F})*{G}+ {F}*(D{G}) +  {F}*({G} * \delta(x))-(\delta(x) * {F})*{G},
\end{eqnarray*}
where we used the fact that $D$ satisfies the Leibniz rule and the product $*$ is associative (cf. Theorem
\ref{impo}). Adding $({F}*\delta(x))*{G}$, subtracting ${F}*(\delta(x)*{G})$ and rearranging the terms in
the previous expression, we get
\begin{eqnarray*}
 {\widetilde D} ({F}*{G}) & = & (D {F})*{G} + ({F}*\delta(x))*{G} - (\delta(x) * {F})*{G} \\
&& + {F}*(D{G}) + {F}*({G}*\delta(x))-{F}*(\delta(x)*{G}) \, .
\end{eqnarray*}
Since the product $*$ is distributive (cf. Theorem \ref{impo}), we get
\begin{eqnarray*}
 {\widetilde D}({F}*{G})&=&(D {F}+{F}*\delta(x) -\delta(x) * {F})*{G}+{F}*(D{G} +  {G}*\delta(x)-\delta(x)*{G})\\
&=&(\widetilde  D{F})*{G}+{F}*  (\widetilde D {G})
\end{eqnarray*}
which concludes the proof.
\end{proof}

Let us now calculate the action of $\wt D_{I}$ for several cases, explicitly. It is trivial to
realize from eq.(\ref{pp}) that:
\begin{equation} \label{DCases}
\wt D_{I} f = D f  \quad \mbox{and} \quad \wt D_{I} \Delta = D \Delta
\end{equation}
for every smooth function $f\in \C^\infty$ and every distribution $\Delta$ with {\it finite} support. More
generally,
\begin{theorem} \label{DFormFTheo}
Let ${I} \subset \RE$ be a finite set and let $F \in \A_{I}$. In view of Theorem
\ref{f6}, $F$ can be written in the form $F=f+\Delta$, where $f=\sum_{i=1}^m f_i \chi_{\Omega_i}$, $f_i \in
\C^\infty (\RE)$ and supp $\Delta \subseteq {I}$. Then{, for every non-negative integer
$k$,} we have:
\begin{equation} \label{DFormF}
\widetilde D^k_{I} F= \sum_{i=1}^m f_i^{(k)} \chi_{\Omega_i} + D^k \Delta \, .
\end{equation}
\end{theorem}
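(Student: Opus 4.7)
The plan is to induct on $k$, with all the substantive content concentrated in the passage from stage $k$ to stage $k+1$; the base case $k=0$ is just the decomposition $F=f+\Delta$ itself. So the main task is to establish the case $k=1$ and then observe that $\wt D_I F$ has the same structural form as $F$, allowing the argument to be iterated.

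For the case $k=1$, I would split $\wt D_I F = \wt D_I f + \wt D_I \Delta$ by linearity (Theorem \ref{PDtilde}) and handle the two pieces separately. For the singular piece, the goal is to show $\wh\Gamma(x_0)\Delta = 0$ for every $x_0\in I$, so that $\wt D_I \Delta = D\Delta$ by \eqref{pp}. This follows because $\mathrm{supp}\,\Delta\subseteq I$ is finite: on each sufficiently small punctured neighborhood $(x_0-\epsilon,x_0)\cup(x_0,x_0+\epsilon)$ the distribution $\Delta$ vanishes, so the lateral smooth extensions $f_\pm$ appearing in Theorem \ref{hhh1} (applied at $x_0$ with $n=0$, $j=0$, using the obvious translation from $0$ to $x_0$) are identically zero, and hence $\wh\delta_\pm(x_0)\Delta=0$.

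For the piecewise smooth piece $f=\sum_i f_i\chi_{\Omega_i}$, the standard formula for the distributional derivative of a function with jumps at $I$ reads
$$ Df \;=\; \sum_i f_i'\,\chi_{\Omega_i} \;+\; \sum_{x_0\in I}\bigl[f(x_0^+)-f(x_0^-)\bigr]\,\delta(x-x_0),$$
while Theorem \ref{hhh1} (at $x_0$, with $n=0$, $j=0$) yields
$$ \wh\Gamma(x_0)f \;=\; \bigl[\wh\delta_-(x_0)-\wh\delta_+(x_0)\bigr]f \;=\; \bigl[f(x_0^-)-f(x_0^+)\bigr]\,\delta(x-x_0).$$
Summing the latter over $x_0\in I$ and adding to $Df$, the delta contributions cancel exactly, leaving $\wt D_I f = \sum_i f_i'\chi_{\Omega_i}$. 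Combined with $\wt D_I\Delta=D\Delta$ this gives the case $k=1$.

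For the induction step, the key observation is that $\wt D_I^k F = \sum_i f_i^{(k)}\chi_{\Omega_i} + D^k\Delta$ has exactly the same structural form as $F$: the pieces $f_i^{(k)}$ lie in $\C^\infty$, and $D^k\Delta$ is still supported in $I$ because distributional differentiation does not enlarge support. Hence the $k=1$ argument applies verbatim to $\wt D_I^k F$, yielding $\wt D_I^{k+1}F=\sum_i f_i^{(k+1)}\chi_{\Omega_i}+D^{k+1}\Delta$. The main (conceptual rather than technical) obstacle is the exact cancellation between the jump-deltas produced by the classical distributional derivative and the correction $\sum_{x_0\in I}\wh\Gamma(x_0)f$; but this cancellation is precisely what the definition of $\wt D_I$ was engineered to achieve, so once Theorem \ref{hhh1} is invoked the matter reduces to a sign check.
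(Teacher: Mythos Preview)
Your proof is correct. Both you and the paper proceed by induction on $k$ with a trivial base case, but the inductive step is handled differently. The paper exploits the Leibniz rule for $*$ (Theorem~\ref{PDtilde}, just proved) together with the identity $\wt D_I\chi_{\Omega_i}=0$ when $\partial\Omega_i\subseteq I$: writing $f_i^{(k)}\chi_{\Omega_i}=f_i^{(k)}*\chi_{\Omega_i}$, Leibniz gives $\wt D_I(f_i^{(k)}*\chi_{\Omega_i})=f_i^{(k+1)}*\chi_{\Omega_i}$, and eq.~(\ref{DCases}) handles the $\Delta$ term. You instead bypass Leibniz entirely, computing $\wt D_I f$ directly from the jump formula for $Df$ and the explicit action of $\wh\Gamma(x_0)$ via Theorem~\ref{hhh1}, verifying the sign cancellation by hand. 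Your route is more elementary (it needs only Theorem~\ref{hhh1} and classical distribution facts), while the paper's route is slicker and showcases the algebraic structure: the identity $\wt D_I\chi_{\Omega_i}=0$ is the conceptual heart of why $\wt D_I$ behaves like a covariant derivative adapted to the partition. Note also that eq.~(\ref{DCases}) in the paper already records $\wt D_I\Delta=D\Delta$, so your re-derivation of that fact via Theorem~\ref{hhh1} is unnecessary but harmless.
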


\begin{proof}
We proceed by induction. The identity (\ref{DFormF}) is trivial for $k=0$. Moreover
\begin{eqnarray*}
  \widetilde D^{k+1}_{I} F=\widetilde D_{I} \left(  \widetilde D^{k}_{I} F \right)
\end{eqnarray*}
and using eqs.(\ref{DCases},\ref{DFormF}), we get
\begin{eqnarray*}
  \widetilde D^{k+1}_{I}F = \widetilde D_{I}\left(
\sum_{i=1}^m f_i^{(k)} \chi_{\Omega_i} + D^k \Delta
  \right) = \sum_{i=1}^m \widetilde D_{I} \left( f_i^{(k)} \chi_{\Omega_i} \right) + D^{k+1} \Delta
  \, .
\end{eqnarray*}

Since $\widetilde D_{I}$ satisfies the Leibniz rule with respect to the product $*$ (cf. Theorem
\ref{PDtilde}), and $f_i^{(k)} \chi_{\Omega_i} = f_i^{(k)}
* \chi_{\Omega_i}$ (since $f_i^{(k)}$ is smooth), we have
\begin{eqnarray*}
\widetilde D^{k+1}_{I} F= \sum_{i=1}^m f_i^{(k+1)} \chi_{\Omega_i} + f_i^{(k)} *
\left(\widetilde D_{I}
\chi_{\Omega_i} \right) +D^{k+1}\Delta \, .
\end{eqnarray*}
Finally, from eq.(\ref{pp}) we easily realize that if $\partial\Omega_i \subseteq {I}$ then $\wt
D_{I} \chi_{\Omega_i}=0$. Hence, eq.(\ref{DFormF}) is valid for all $k\in \N_0$.

\end{proof}

The previous result shows that $\wt D_{I}$ (contrary to $D$) is an inner operator in $\C_p^\infty \cap \C^\infty (\RE \backslash {I}) $. More generally,
$$
\wt D_{I}: \, \C_p^\infty \cap \C^\infty (\RE \backslash {J}) \longrightarrow \C_p^\infty \cap \C^\infty (\RE \backslash {J})
$$
for every ${J} \subseteq {I}$.

Another simple consequence of Theorem \ref{DFormFTheo} is:

\begin{corollary}
Let ${F}\in\A_{\left\{0\right\}}$  be written in the form (\ref{0}). Then, for any non negative integer $k$,
\begin{equation} \label{qq}
  \widetilde D^k {F}=H_-D^k{f}_-+HD^k{f}_++D^k\Delta \, ,
\end{equation}
and, in particular, $\widetilde D H=\widetilde D H_- =0$. Recall
that $\widetilde D$ denotes ${\widetilde D} _{\{0\}}$.
\end{corollary}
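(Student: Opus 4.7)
The corollary is essentially a specialization of Theorem \ref{DFormFTheo} to the case $I = \{0\}$, so my plan is to recognize (\ref{0}) as an instance of the decomposition in Theorem \ref{f6} and then quote (\ref{DFormF}) directly.

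First, I would observe that if $F \in \A_{\{0\}}$ is written as in (\ref{0}), namely $F = H_- f_- + H f_+ + \Delta$ with $f_\pm \in \C^\infty(\RE)$ and $\mathrm{supp}\,\Delta \subseteq \{0\}$, then this is precisely the form $F = f + \Delta$ with $f = \sum_i f_i \chi_{\Omega_i}$ appearing in Theorems \ref{f6} and \ref{DFormFTheo}, taking the two intervals $\Omega_0 = (-\infty,0)$ and $\Omega_1 = (0,\infty)$, with $f_0 = f_-$ and $f_1 = f_+$ (and noting $H_- = \chi_{\Omega_0}$, $H = \chi_{\Omega_1}$ as regular distributions). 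Since $\partial\Omega_0 \cup \partial\Omega_1 = \{0\} = I$, the hypotheses of Theorem \ref{DFormFTheo} are satisfied, so applying (\ref{DFormF}) with $\wt D = \wt D_{\{0\}}$ gives
\begin{equation*}
\wt D^k F = f_-^{(k)} \chi_{\Omega_0} + f_+^{(k)} \chi_{\Omega_1} + D^k \Delta = H_- D^k f_- + H D^k f_+ + D^k \Delta \, ,
\end{equation*}
which is (\ref{qq}).

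For the special cases, I would simply plug in. Taking $f_- \equiv 0$, $f_+ \equiv 1$, $\Delta = 0$, and $k=1$ in the formula just proved yields $\wt D H = H_-\cdot 0 + H \cdot 0 + 0 = 0$, and analogously $f_- \equiv 1$, $f_+ \equiv 0$ gives $\wt D H_- = 0$. As a sanity check one may also compute directly from the definition: $\wt D H = DH + \wh\Gamma(0)H = \delta(x) + H*\delta(x) - \delta(x)*H$, and the examples listed after Theorem \ref{2.5} give $H*\delta = 0$ and $\delta*H = \delta$, so $\wt D H = \delta - \delta = 0$, and similarly for $H_-$.

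There is no real obstacle here; the entire content is bookkeeping to match the decomposition (\ref{0}) with the hypotheses of Theorem \ref{DFormFTheo}. The only minor point worth stating explicitly is the identification $H_- = \chi_{(-\infty,0)}$ and $H = \chi_{(0,\infty)}$ as distributions (the value at the single point $0$ being irrelevant), which legitimizes treating (\ref{0}) as an instance of the canonical form in Theorem \ref{f6}.
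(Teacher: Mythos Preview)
Your proposal is correct and matches the paper's approach: the corollary is presented in the paper as an immediate consequence of Theorem \ref{DFormFTheo} (no separate proof is given), and your argument simply spells out that specialization to $I=\{0\}$.
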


If $F \notin \A_I$ then the relation between $\widetilde D^k_I {F} $ and $D^k{F}$ is not
so simple. For $I=\{0\}$, we have:

\begin{theorem}
Let ${F}\in\A$. Then for any positive integer $n$,
\begin{equation} \label{importanteJ}
  \widetilde D^{n}{F}=(D+\widehat\Gamma)^n {F}=D^{n} {F}+ \sum_{j=1}^n \left( \begin{gathered}
  n \\
  j  \\
\end{gathered}  \right)
 \widehat\Gamma^{(j-1)} D^{n-j}  {F}
\end{equation}
\end{theorem}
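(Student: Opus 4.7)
The first equality $\wt D^n F = (D+\wh\Gamma)^n F$ is immediate from the definition $\wt D = D+\wh\Gamma$ (recalling that here $I = \{0\}$ so $\wh\Gamma = \wh\Gamma(0)$), iterated $n$ times. The binomial-type expansion I would prove by induction on $n$, after setting up two auxiliary identities on $\A$.

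The first auxiliary identity is the commutation relation $D\wh\Gamma^{(k)} = \wh\Gamma^{(k)}D + \wh\Gamma^{(k+1)}$, valid for every $k\in \N_0$. It follows by writing $\wh\Gamma^{(k)} F = F * \delta^{(k)}(x) - \delta^{(k)}(x) * F$, applying the Leibniz rule for $*$ (Theorem \ref{impo}) to each term, and using $D\delta^{(k)} = \delta^{(k+1)}$. The second auxiliary identity is the nilpotency $\wh\Gamma \circ \wh\Gamma^{(k)} = 0$ on $\A$: by Theorem \ref{hhh1}, $\wh\Gamma^{(k)} F$ is a finite linear combination of terms $\delta^{(l)}(x)$ with $l\le k$, so it suffices to observe $\wh\Gamma \delta^{(l)} = \delta^{(l)}*\delta - \delta*\delta^{(l)} = 0$, which is the relation $\delta^{(k)}*\delta^{(l)} = 0$ recalled right after Theorem \ref{2.5}.

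With these in hand the induction is essentially bookkeeping. The base $n=1$ is the definition of $\wt D$. For the step I apply $\wt D = D+\wh\Gamma$ to the inductive hypothesis: the cross terms $\wh\Gamma\,\wh\Gamma^{(j-1)} D^{n-j} F$ vanish by the nilpotency, while each $D\,\wh\Gamma^{(j-1)} D^{n-j} F$ splits, via the commutation identity, into $\wh\Gamma^{(j-1)} D^{n-j+1} F + \wh\Gamma^{(j)} D^{n-j} F$. Reindexing both resulting sums so that they run over $\wh\Gamma^{(k-1)} D^{n+1-k} F$, combining with the leftover $\wh\Gamma D^n F$ coming from the outer $\wh\Gamma$ acting on $D^n F$, and invoking Pascal's rule $\binom{n}{k}+\binom{n}{k-1}=\binom{n+1}{k}$ then delivers the formula with $n$ replaced by $n+1$.

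The main obstacle is obtaining the commutation identity cleanly. Because $*$ is noncommutative, the two summands of $\wh\Gamma^{(k)}$ must be kept strictly separate throughout the computation, and it is the genuinely nonzero \emph{defect} $\wh\Gamma^{(k+1)}$ (rather than a vanishing commutator) that is responsible for the appearance of higher-order $\wh\Gamma^{(j-1)}$ factors in the final expansion. Once that identity is correctly derived, the nilpotency of $\wh\Gamma$ on its own image kills every higher power, and the induction collapses to a purely combinatorial reindexing.
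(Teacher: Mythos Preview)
Your proof is correct and follows essentially the same route as the paper: induction on $n$, with the key inputs being the commutation relation $D\wh\Gamma^{(j-1)} = \wh\Gamma^{(j-1)}D + \wh\Gamma^{(j)}$ (the paper's eq.~(\ref{importanten})) and the vanishing $\wh\Gamma\bigl(\wh\Gamma^{(j-1)}D^{n-j}F\bigr)=0$, followed by reindexing and Pascal's identity. The only difference is cosmetic: you isolate and justify these two identities up front via the Leibniz rule and $\delta^{(k)}*\delta^{(l)}=0$, whereas the paper invokes them inline during the inductive step.
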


\begin{proof}
We proceed by induction. The case $n=1$ is obvious. Suppose that (\ref{importanteJ}) holds for $n\in\N$. Then
\begin{eqnarray}\label{importantem}
\nonumber  \widetilde  D^{n+1}{F}&=&(D+\widehat\Gamma)^{n+1}{F}=(D+\widehat\Gamma)(D+\widehat\Gamma)^n {F}\\
&=& (D+\widehat\Gamma)(D^{n}{F}+ \sum_{j=1}^n \left( \begin{gathered}
  n  \\
  j  \\
\end{gathered}  \right)
 \widehat\Gamma^{(j-1)} D^{n-j}  {F})\\
\nonumber &=& D^{n+1}{F}+ \widehat\Gamma D^n {F} + \sum_{j=1}^n \left( \begin{gathered}
  n \\
  j  \\
\end{gathered}  \right)
D\widehat\Gamma^{(j-1)} D^{n-j}  {F}
\end{eqnarray}
where in the last step we took into account that $\widehat\Gamma \left( \widehat\Gamma^{(j-1)}D^{n-j}{F}\right)=0$. \\
Noticing that
\begin{eqnarray}\label{importanten}
 D\widehat\Gamma^{(j-1)} D^{n-j}  {F}=\widehat\Gamma^{(j)}D^{n-j}  {F}+\widehat\Gamma^{(j-1)}D^{n-j+1}  {F}
\end{eqnarray}
and substituting (\ref{importanten}) in equation (\ref{importantem}) we get

\begin{eqnarray*}
   \widetilde D^{n+1}{F}&=&D^{n+1}{F}+ \sum_{j=0}^n \left( \begin{gathered}
  n  \\
  j  \\
\end{gathered}  \right)
 \widehat\Gamma^{(j)} D^{n-j}  {F}+ \sum_{j=1}^n \left( \begin{gathered}
  n  \\
  j  \\
\end{gathered}  \right)
 \widehat\Gamma^{(j-1)} D^{n-j+1}  {F}\\
&=&D^{n+1}{F}+ \sum_{j=1}^{n+1} \left( \begin{gathered}
  n  \\
  j-1  \\
\end{gathered}  \right)
 \widehat\Gamma^{(j-1)} D^{n-j+1}  {F}+ \sum_{j=1}^n \left( \begin{gathered}
  n  \\
  j \\
\end{gathered}  \right)
 \widehat\Gamma^{(j-1)} D^{n-j+1}  {F}\\
&=& D^{n+1}{F}+ \sum_{j=1}^{n+1} \left( \begin{gathered}
  n+1  \\
  j  \\
\end{gathered}  \right)
 \widehat\Gamma^{(j-1)} D^{n-j+1}  {F}
\end{eqnarray*}
where in the last equality we used Pascal's identity. Hence, eq.(\ref{importanteJ}) is valid for all $n\in \N$.
\end{proof}

\section{Linear Differential Equations with point interactions} \label{Linear}

We now return to Problem 1. Recall that ODE$_1$ is a linear ordinary differential equation of the form:
\begin{equation} \label{iiii}
\sum_{i=0}^n a_i D^{i}\psi=f
\end{equation}
where $a_i,f\in\C^{\infty}(\RE)$ and ${a_n}(x) \ne 0$, for all $x
\in \mathbb{R}$. It follows from Picard's theorem and other
general results in the theory of linear ODEs [Lemma 2.3 and
Theorem 3.9 \cite{Teschl}] that a solution of (\ref{iiii}) exists,
is smooth and globally defined on $\RE$ (and is also unique for
each initial data).

The aim of this section is to derive a new differential equation
(denoted ODE$_2$) whose solutions satisfy the conditions (C1) and
(C2) stated in Problem 1. To simplify the presentation we will
consider the particular case ${I}=\{0 \}$. The general case where
${I}$ is an arbitrary finite set, can be solved exactly in the
same way. For ${I} =\{0\}$, the conditions (C2) can be written
as
\begin{equation} \label{ICS}
A \overline{\psi(0^-)}= B \overline{\psi(0^+)}
\end{equation}
where $A=[A_{ij}]$ and $B=[B_{ij}]$ are two $m \times n$ matrices with $m \le n$, and $0 \le i \le m-1$, $0 \le
j \le n-1$ (the lower bound of $i,j$ is set to zero in order to simplify the presentation).

The main results of this section are given in Theorem \ref{importante} and Theorem \ref{importanteq}. Some
preparatory results are presented in Theorem \ref{F1F2} and Corollary \ref{4.3}. We start by defining an
operator that will be used to impose the conditions (C2).

\begin{definition}
Let $A,B$ be defined as above. The "interface operator" $\widehat F:\A\rightarrow\DO'$ is the singular, rank $m$
operator defined by
\begin{equation} \label{bf}
\widehat F=\sum_{i=0}^{m-1} \sum_{j=0}^{n-1} {(A_{ij} D^i \wh\delta_- D^j-B_{ij} D^i \wh\delta_+ D^j )}\, .
\end{equation}
\end{definition}

The next Theorem provides two equivalent formulations of $\wh F$.

\begin{theorem} \label{F1F2}
Let $\psi \in \A$. The operator (\ref{bf}) can be written as:
\begin{equation}\label{F1}
 \widehat F\psi=\sum_{j=0}^{n-1}\sum_{i\geq k=0}^{m-1} \left( \begin{gathered}
  i \\
  k  \\
\end{gathered}  \right) \left({ A_{ij}\widehat \delta_-^{(k)}-B_{ij}\widehat \delta_+^{(k)}}\right) D^{i+j-k}\psi
\end{equation}
and also as:
\begin{equation}\label{F2}
 \widehat F\psi= \sum_{i=0}^{m-1}\sum_{j=0}^{n-1} \left({A_{ij}\psi_-^{(j)}(0) -B_{ij}\psi_+^{(j)}(0)}\right) \delta^{(i)}(x)
\end{equation}
where the functions $\psi_\pm \in \C^\infty(\RE)$ (associated with $\psi$) are given explicitly by Theorem
\ref{18}.

\end{theorem}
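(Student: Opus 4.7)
The plan is to derive (\ref{F1}) and (\ref{F2}) directly from the defining expression (\ref{bf}), using as main tools the Leibniz rule for the distributional derivative $D$ with respect to the product $*$ (Theorem \ref{impo}) and the explicit evaluation of $\wh\delta_\pm^{(n)}D^{j}F$ supplied by Theorem \ref{hhh1}.

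For (\ref{F1}), the strategy is to commute the outer $D^{i}$ past $\wh\delta_\pm$ inside every summand of (\ref{bf}). Applying the Leibniz rule to $D[\delta^{(k)}*F]$ and $D[F*\delta^{(k)}]$ gives the operator identity
$$
D\,\wh\delta_\pm^{(k)} \;=\; \wh\delta_\pm^{(k+1)} \;+\; \wh\delta_\pm^{(k)}\,D.
$$
A routine induction on $i$, whose inductive step combines this identity with Pascal's identity, then yields
$$
D^{i}\,\wh\delta_\pm \;=\; \sum_{k=0}^{i}\binom{i}{k}\,\wh\delta_\pm^{(k)}\,D^{i-k}.
$$
Applying this operator to $D^{j}\psi$, substituting into (\ref{bf}), and reorganising the double sum (the constraint $k\leq i$ being automatic since $\binom{i}{k}$ vanishes for $k>i$) produces (\ref{F1}).

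For (\ref{F2}), the computation is more direct. By Theorem \ref{18}, $\psi$ admits smooth representatives $\psi_\pm\in\C^{\infty}(\RE)$ on a neighbourhood of $0$, and Theorem \ref{hhh1} applied with $n=0$ gives immediately
$$
\wh\delta_\pm\,D^{j}\psi \;=\; \psi_\pm^{(j)}(0)\,\delta(x).
$$
Since $\psi_\pm^{(j)}(0)\in\CO$ is a constant, applying $D^{i}$ to both sides produces $\psi_\pm^{(j)}(0)\,\delta^{(i)}(x)$. Substituting into (\ref{bf}) and summing the contributions weighted by $A_{ij}$ and $-B_{ij}$ yields (\ref{F2}). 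I do not anticipate any serious obstacle beyond the routine bookkeeping in the induction leading to the binomial commutator identity; as a consistency check, one can re-derive (\ref{F2}) from (\ref{F1}) by applying Theorem \ref{hhh1} once more to each summand $\wh\delta_\pm^{(k)}D^{i+j-k}\psi$ and collecting the coefficients of each $\delta^{(r)}(x)$, whereupon the binomial sums appearing in (\ref{F1}) telescope precisely to the simple expression on the right-hand side of (\ref{F2}).
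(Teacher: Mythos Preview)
Your proposal is correct and follows essentially the same approach as the paper: for (\ref{F1}) you both use the Leibniz rule for $D$ with respect to $*$ to expand $D^{i}(\delta*\psi^{(j)})$ and $D^{i}(\psi^{(j)}*\delta)$ as binomial sums in $\wh\delta_\pm^{(k)}$ (the paper applies the $i$-fold Leibniz expansion directly, you phrase it as an induction via the commutator identity, but the content is identical), and for (\ref{F2}) you both invoke Theorem~\ref{hhh1} with $n=0$ to obtain $\wh\delta_\pm D^{j}\psi=\psi_\pm^{(j)}(0)\delta(x)$ and then apply $D^{i}$.
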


\begin{proof}
From the definitions of $\wh F$ and $\wh\delta_\pm$, we have:
\begin{equation} \label{F3}
\wh F \psi =\sum_{i=0}^{m-1} \sum_{j=0}^{n-1} {\left(  A_{ij} D^i (\psi^{(j)} *
\delta (x) )-B_{ij} D^i (\delta(x) *\psi^{(j)}) \right)}
\end{equation}
Since $D$ satisfies the Leibniz rule with respect to $*$ (cf. Theorem \ref{impo}),
\begin{equation*}
D^i (\delta (x) * \psi^{(j)})  =  \sum_{k=0}^{i} \left( \begin{gathered}
  i \\
  k  \\
\end{gathered}  \right)
\delta^{(k)}(x) * \psi^{(i+j-k)}   = \sum_{k=0}^{i} \left( \begin{gathered}
  i \\
  k  \\
\end{gathered}  \right)
\wh\delta_+^{(k)} D^{i+j-k} \psi
\end{equation*}
and likewise:
$$
D^i(  \psi^{(j)}* \delta(x)) = \sum_{k=0}^{i} \left( \begin{gathered}
  i \\
  k  \\
\end{gathered}  \right)
\wh\delta_-^{(k)} D^{i+j-k} \psi
$$
Substituting these formulas in (\ref{F3}), we obtain (\ref{F1}).

We now go back to (\ref{F3}). From eq.(\ref{hj}) we have:
\begin{equation} \label{F4aux}
\widehat \delta _\pm \psi^{(j)}=\delta (x) \psi^{(j)}_\pm(0), \quad  j=0,\ldots , n-1
\end{equation}
where, for each $\psi \in \A$, the functions $\psi_-,\psi_+ \in \C^\infty(\RE)$ are such that, for some
$\epsilon
> 0$
$$
\psi |_{(-\epsilon,0)} = \psi_- \chi_{(-\epsilon,0)} \quad \mbox{and} \quad \psi |_{(0,\epsilon)} = \psi_+
\chi_{(0,\epsilon)} \, .
$$
These functions always exist (cf. Theorem \ref{18}). Substituting (\ref{F4aux}) in (\ref{F3}) we get (\ref{F2}),
concluding the proof.

\end{proof}

An important corollary of the previous result is:

\begin{corollary} \label{4.3}
Let $\widehat F$ be the operator (\ref{bf}). Then $\widehat F$ satisfies:
\begin{itemize}
\item [(i)]$\text{supp}(\widehat F \psi) \subseteq \left\{0\right\}$, for all $\psi\in \A $
\item [(ii)] $\text{Ker}(\widehat F )=\left\{\psi\in \A: A\overline{\psi_-({0})} =B\overline{\psi_+({0 })}\right\}$,
\end{itemize}
\end{corollary}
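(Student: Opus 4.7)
The plan is to deduce both statements directly from the representation (\ref{F2}) of $\widehat F\psi$ established in the preceding theorem, together with the standard fact that the distributions $\delta^{(i)}(x)$, $i \in \N_0$, are linearly independent in $\DO'$.

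For part (i), formula (\ref{F2}) displays $\widehat F \psi$ as a finite linear combination of the distributions $\delta^{(i)}(x)$, $i=0,\ldots,m-1$. Since each $\delta^{(i)}(x)$ has support $\{0\}$, the support of any finite linear combination is contained in $\{0\}$, and therefore $\text{supp}(\widehat F \psi) \subseteq \{0\}$ for every $\psi \in \A$.

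For part (ii), I would regroup (\ref{F2}) as
\begin{equation*}
\widehat F \psi = \sum_{i=0}^{m-1} c_i(\psi)\, \delta^{(i)}(x), \qquad c_i(\psi) = \sum_{j=0}^{n-1} \left(A_{ij}\psi_-^{(j)}(0) - B_{ij}\psi_+^{(j)}(0)\right).
\end{equation*}
By linear independence of $\{\delta^{(i)}\}_{i=0}^{m-1}$, the equality $\widehat F \psi = 0$ holds if and only if $c_i(\psi) = 0$ for every $i=0,\ldots,m-1$. Recognising the coefficient $c_i(\psi)$ as the $i$-th entry of the vector $A\overline{\psi_-(0)} - B\overline{\psi_+(0)}$ (using the definition (\ref{overbarpsi}) and noting that $\psi_\pm^{(j)}(0)$ coincides with the lateral limit $\lim_{\epsilon \to 0^+} D^j\psi(0\pm\epsilon)$, thanks to Theorem \ref{18} and the smoothness of $\psi_\pm$), this system of $m$ scalar equations is exactly the matrix condition $A\overline{\psi_-(0)} = B\overline{\psi_+(0)}$. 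Hence $\text{Ker}(\widehat F)$ coincides with the set stated.

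No serious obstacle is expected: the hard work has already been done in Theorem \ref{F1F2}, which provides the explicit form (\ref{F2}); the corollary is essentially a reading of that formula. The only minor point needing care is the identification of $\psi_\pm^{(j)}(0)$ with the lateral limits appearing in (\ref{overbarpsi}), which follows immediately because $\psi$ and the smooth extensions $\psi_\pm$ agree on one-sided neighbourhoods of $0$ by Theorem \ref{18}, so that their derivatives coincide there and the one-sided limits match.
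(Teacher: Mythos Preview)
Your proof is correct and follows essentially the same approach as the paper: both deduce (i) and (ii) directly from formula (\ref{F2}), using the fact that $\widehat F\psi$ is a finite combination of the $\delta^{(i)}$ and the linear independence of these distributions. Your version is slightly more explicit about the identification of $\psi_\pm^{(j)}(0)$ with the lateral limits in (\ref{overbarpsi}), but the argument is the same.
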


\begin{proof}
It follows from (\ref{F2}) that $\text{supp}( \widehat F\psi) \subseteq \left\{0\right\}$ for all $\psi\in \A$.
Moreover, also from (\ref{F2})
\begin{eqnarray*}
\psi\in \text{Ker}(\widehat F )&\Leftrightarrow & \sum_{i=0}^{m-1} \delta^{(i)}(x) \left( \sum_{j=0}^{n-1}
A_{ij}\psi_-^{(j)}(0)-B_{ij}\psi_+^{(j)}(0) \right)=0\\
&\Leftrightarrow& A\overline{\psi_-({0})} =B\overline{\psi_+({0 })}
\end{eqnarray*}
 and so $\widehat F$ also satisfies the condition (ii).
\end{proof}
\indent

The next Theorems \ref{importante} and {\ref{importanteq}} provide two equivalent formulations of the ODE$_2$.

\begin{theorem}\label{importante}
Let the ODE$_1$ be given by eq.(\ref{iiii}), and the interface conditions by (\ref{ICS}). Then the ODE$_2$ can
be written explicitly as:
\begin{equation}\label{importanteC}
\sum_{i=0}^n a_{i}   \widetilde D^{i} \psi + \widehat F \psi=f
\end{equation}
where $\widetilde D$ is the new derivative operator (\ref{pp}) with $I=\{0\}$, and $\widehat F$ is given by eq.(\ref{bf}).
\end{theorem}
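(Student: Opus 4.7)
The plan is to verify that the equation (\ref{importanteC}), viewed as an identity in $\DO'$, is equivalent to the pair (C1)+(C2). The key tools are the explicit formula (\ref{qq}) for iterates of $\widetilde D$ on distributions in $\A_{\{0\}}$, the characterization (\ref{F2}) of $\wh F$ together with Corollary \ref{4.3} (giving that $\wh F\psi$ is supported in $\{0\}$ and that $\text{Ker}(\wh F)$ is precisely the set of distributions satisfying the interface condition (\ref{ICS})), and the assumption $a_n(x)\neq 0$ everywhere.

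First I would show that every solution $\psi\in\A$ of (\ref{importanteC}) must have singular support contained in $\{0\}$. Since every $\widehat\Gamma(0)$-term in the definition (\ref{pp}) of $\widetilde D$ is supported at $\{0\}$, and so is $\wh F\psi$ by Corollary \ref{4.3}(i), the restriction of (\ref{importanteC}) to $\RE\setminus\{0\}$ collapses to the classical $\sum_{i=0}^n a_i D^i\psi = f$. If $\psi$ had any discontinuity or $\delta^{(k)}$-component at a point $x_1\neq 0$, then the top-order piece $a_n D^n\psi$ would produce a nonzero singular contribution at $x_1$ (using $a_n(x_1)\neq 0$) of strictly higher derivative order than any of the lower-order terms $a_i D^i\psi$ can cancel; this cannot be matched by the smooth $f$. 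Hence sing supp $\psi\subseteq\{0\}$, and this same restricted equation is exactly (C1).

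With $\psi\in\A_{\{0\}}$, Theorem \ref{18} allows me to write $\psi = H_- f_- + H f_+ + \Delta$ with $f_\pm\in\C^\infty(\RE)$ and $\text{supp}\,\Delta\subseteq\{0\}$. By (\ref{qq}), $\widetilde D^i\psi = H_- f_-^{(i)} + H f_+^{(i)} + D^i\Delta$, so (\ref{importanteC}) reads
$$H_-\sum_{i=0}^n a_i f_-^{(i)} + H\sum_{i=0}^n a_i f_+^{(i)} + \sum_{i=0}^n a_i D^i\Delta + \wh F\psi = f.$$
Condition (C1) forces the two regular sums to equal $f$ on their respective half-lines, leaving the purely singular identity $\sum_{i=0}^n a_i D^i\Delta + \wh F\psi = 0$ supported at $\{0\}$.

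The main obstacle is showing that this singular identity forces $\Delta=0$. I would compare highest-order $\delta^{(k)}$ coefficients on the two sides: if $\Delta$ had a nonzero top-order term $c_N\delta^{(N)}$, then expanding $a_n D^n\Delta$ via the standard rule $\phi\,\delta^{(k)}=\sum_{j=0}^k(-1)^j\binom{k}{j}\phi^{(j)}(0)\delta^{(k-j)}$ yields a nonzero $a_n(0)c_N\delta^{(n+N)}$ component, whereas by (\ref{F2}) the distribution $\wh F\psi$ only contains $\delta^{(k)}$ with $k\le m-1\le n-1 < n+N$, a contradiction. Hence $\Delta=0$ and $\wh F\psi=0$, and Corollary \ref{4.3}(ii) then yields exactly (C2). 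The converse direction follows by direct substitution of $\psi = H_- f_- + H f_+$ (with smooth $f_\pm$ solving ODE$_1$ and satisfying (\ref{ICS})) into (\ref{importanteC}), again using (\ref{qq}) and (\ref{F2}), which completes the equivalence.
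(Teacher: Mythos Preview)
Your proof is correct and follows essentially the same route as the paper: restrict to $\RE\setminus\{0\}$ to recover ODE$_1$ and hence $\text{sing supp}\,\psi\subseteq\{0\}$, decompose $\psi=H_-\psi_-+H\psi_++\Delta$, apply (\ref{qq}) and (\ref{F2}), separate regular from singular parts, and use an order-count against $a_n(0)\neq 0$ to force $\Delta=0$ and then $\wh F\psi=0$. One cosmetic point: the decomposition $\psi=H_-f_-+Hf_++\Delta$ for $\psi\in\A_{\{0\}}$ is really (\ref{0}) (a consequence of Theorem~\ref{f6}), not Theorem~\ref{18}, which only gives the local behavior on $(-\epsilon,0)\cup(0,\epsilon)$.
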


\begin{proof}
For a general $\psi\in\A$ the support of $\wh F \psi$ is, at the most, $\left\{0\right\}$ (cf. Corollary
\ref{4.3}). Moreover $\widetilde D{^i} \psi = D{^i} \psi,{\, i=0,...,n}$ on
$\RE \backslash \{0\}$ (cf. eq.(\ref{importanteJ})). Hence, on $\RE_{\mathbin{+}}$ and
$\RE_{\mathbin{-}}$, eq.(\ref{importanteC}) reduces to eq.(\ref{iiii}) and thus any global solution
of eq.(\ref{importanteC}) will be of the form
\begin{equation}\label{importanteE}
\psi=H_-\psi_-+H\psi_++\Delta,
\end{equation}
where $\psi_-,\psi_+ \in \C^{\infty}(\RE)$ and supp $\Delta\subseteq\left\{0\right\}$. Notice that $\psi$ in
eq.(\ref{importanteE}) is the most general distribution $\psi \in \DO'$, such that $\psi|_{\RE_-}$ and
$\psi|_{\RE_+}$ are smooth functions with smooth extensions to $\RE$ (recall that the solutions of ODE$_1$ are
smooth and maximally defined on $\RE$).

Substituting (\ref{importanteE}) in (\ref{importanteC}), and using (\ref{qq}) and (\ref{F2}), we get from (\ref{importanteC}): 
\begin{eqnarray}\label{importantee}
&&\sum_{i=0}^n a_{i}  (H_-D^{i}\psi_-+HD^{i}\psi_+) + \sum_{i=0}^n a_{i}  D^{i}\Delta \\
\nonumber &&+\sum_{i=0}^{m-1}\sum_{j=0}^{n-1}\delta^{(i)}(x) \left({A_{ij}\psi_-^{(j)}(0)
-B_{ij}\psi^{(j)}_+(0)} \right) =f \, .
\end{eqnarray}
Separating the terms that involve the Dirac delta and its derivatives from those that do not, we obtain
$$\sum_{i=0}^n a_{i}  (H_-D^{i}\psi_-+HD^{i}\psi_+)=f
\Leftrightarrow \left\{ \begin{array}{rll}
  \sum\limits_{i=0}^n a_{i} D^{i}\psi_- = f&\hbox{on}& \RE_{\mathbin{- }}  \\
  \sum\limits_{i=0}^n a_{i} D^{i}\psi_+ = f&\hbox{on}& \RE_{\mathbin{+}}
\end{array}\right.$$
and thus, as required, $\psi$ satisfies eq.(\ref{iiii}) on $\RE \backslash \{0\}$. Moreover,
\begin{equation}\label{imm}
 \sum\limits_{i=0}^n a_{i} D^{i}\Delta +E = 0
\end{equation}
where
$$
E= \wh F \psi=\sum_{i=0}^{m-1}\sum_{j=0}^{n-1}\delta^{(i)}(x)\left({A_{ij}\psi_-^{(j)}(0) -B_{ij}\psi^{(j)}_+(0)} 
\right) \, .
$$
Since $\text{supp }\Delta\subseteq\left\{0\right\}$, we have $\Delta=0$ or ord $(\Delta )\geq1$. In the latter
case, ord $(D^{n}\Delta )\geq n+1$ and so ord $(a_n D^{n}\Delta) \geq n+1$ (recall that $a_n(x) \neq 0$,
$\forall x \in \RE$). Moreover, we also have ord $E\leq m$. Taking into account that the terms of different
orders in (\ref{imm}) are linearly independent, and that $m\leq n$, we conclude that $a_nD^{n}\Delta$ cannot be
cancelled by any other term in (\ref{imm}). Hence $\Delta=0$, and thus equation (\ref{imm}) reduces to
\begin{eqnarray} \label{BCond}
E=0 &\Longleftrightarrow & \sum_{i=0}^{m-1}\sum_{j=0}^{n-1}\left({A_{ij}\psi_-^{(j)}(0) -B_{ij}\psi^{(j)}_+(0)}\right)\delta^{(i)}(x)=0 \nonumber\\
&\Longleftrightarrow & \sum_{j=0}^{n-1} {A_{ij}\psi_-^{(j)}(0)}=\sum_{j = 0}^{n - 1} {B_{ij}\psi^{(j)}_+(0)}\, ,
\quad i = 0, \ldots ,m-1
\end{eqnarray}
which are just the interface conditions (C2), given in (\ref{ICS}).

Assembling all these results, we conclude that the global solutions of (\ref{importanteC}) are of the form
$\psi=H_-\psi_-+H\psi_+$, where $\psi_-,\psi_+$ satisfy (\ref{iiii}) and the interface conditions (\ref{BCond})
at $x=0$. Hence, eq.(\ref{importanteC}) yields a possible formulation of ODE$_2$, which concludes the proof.

\end{proof}

\begin{remark} \label{4.5}

Notice that the ODE$_2$ is not uniquely defined for each ODE$_1$, and for each interface condition. In fact, any
other operator $\wh F$ that satisfies the two properties in Corollary \ref{4.3} and such that ord $\wh F \psi
\le n$ for all $\psi \in \A$, can equally well be used to define the ODE$_2$. We can easily conclude that this
is true by reviewing the role played by $\wh F$ in the proof of the previous Theorem.
\end{remark}

The following are two interesting particular cases of eq.(\ref{importanteC}):

\begin{corollary} \label{Corollary4.6}
If we set $A=B=0$ in eq.(\ref{ICS}) (that is, if there are no interface conditions at $x=0$) then $\wh
F=0$, and the ODE$_2$ reduces to:
\begin{equation}
\sum_{i=0}^n a_{i}   \widetilde D^{i} \psi =f \, .
\end{equation}
Its solutions are of the form $\psi=H_- \psi_- + H \psi_+$, where $\psi_\pm$ are two (independent) solutions of
(\ref{iiii}). Hence, $\psi$ might display an arbitrary discontinuity at $x=0$ and thus the dimension of the space of
solutions of ODE$_2$, in this case, is twice the dimension of the space of solutions of ODE$_1$.

If, on the other hand, $A=B ={\bf 1}_{n \times n}$ (where ${\bf 1}_{n \times n}$ is the $n \times n$ identity
matrix) then the ODE$_2$ is equivalent to ODE$_1$ (\ref{iiii}).

\end{corollary}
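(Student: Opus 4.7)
The plan is to apply Theorem \ref{importante} directly in each special case and read off the structure of the solution space; no new technical machinery is required.

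For the case $A=B=0$, I would start by substituting into the definition (\ref{bf}) of $\wh F$: every coefficient vanishes, so $\wh F=0$ identically and the ODE$_2$ collapses to $\sum_{i=0}^n a_i\wt D^{i}\psi=f$. Next I would invoke the argument already used inside the proof of Theorem \ref{importante}: any $\psi\in\A$ that satisfies ODE$_2$ must restrict to a smooth solution of (\ref{iiii}) on each of $\RE_-$ and $\RE_+$, so it has the form $\psi=H_-\psi_-+H\psi_++\Delta$ with $\psi_\pm\in\C^\infty(\RE)$ and $\mathrm{supp}\,\Delta\subseteq\{0\}$. The order-counting step of that proof (using $a_n\neq 0$ and $m\le n$, here with $m=0$ and $E=\wh F\psi=0$) still forces $\Delta=0$. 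With $\wh F\equiv 0$, the interface conditions (\ref{BCond}) are vacuous, leaving $\psi_-$ and $\psi_+$ as arbitrary, independently chosen global smooth solutions of ODE$_1$. Since the solution set of ODE$_1$ is an $n$-dimensional affine space, the set of pairs $(\psi_-,\psi_+)$ forms a $2n$-dimensional affine space, giving the stated doubling.

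For the case $A=B=\mathbf{1}_{n\times n}$, I would again apply Theorem \ref{importante}: solutions of ODE$_2$ have the form $\psi=H_-\psi_-+H\psi_+$ with $\psi_\pm$ smooth solutions of (\ref{iiii}) subject to (\ref{BCond}). With $A_{ij}=B_{ij}=\delta_{ij}$, these conditions reduce precisely to $\psi_-^{(j)}(0)=\psi_+^{(j)}(0)$ for $j=0,\dots,n-1$, i.e.\ $\psi_-$ and $\psi_+$ have matching Cauchy data at the origin. Since (\ref{iiii}) is a linear ODE of order $n$ with smooth coefficients and $a_n$ nowhere vanishing, Picard's theorem (already cited at the start of this section) guarantees uniqueness of the global smooth solution for given initial data, so $\psi_-=\psi_+$ on all of $\RE$. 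Therefore $\psi=\psi_-\in\C^\infty(\RE)$ and it solves ODE$_1$. Conversely every smooth solution of ODE$_1$ trivially satisfies the matching conditions and hence solves ODE$_2$, so the two equations have identical solution sets.

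Neither case involves a genuine obstacle, since the heavy lifting was already done in Theorem \ref{importante}; the only thing to watch is the order-counting argument that forces $\Delta=0$ in the first case, which goes through because $a_n D^n\Delta$ has order at least $n+1$ while $E=0$, so it cannot be cancelled.
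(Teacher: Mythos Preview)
Your proposal is correct and follows exactly the approach implicit in the paper: the corollary is stated there without a separate proof because both cases are immediate specializations of Theorem~\ref{importante}, and you have spelled out precisely those specializations (vanishing $\wh F$ with vacuous interface conditions in the first case, matching Cauchy data plus Picard uniqueness in the second). There is nothing to add.
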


We now present an alternative formulation of the ODE$_2$, which is written in terms of the
standard distributional derivative $D$, the product $*$, and an additive (singular) perturbation of the coefficients:

\begin{theorem} \label{importanteq}
The ODE$_2$ (\ref{importanteC}) can equivalently be written as:
\begin{equation}\label{importanteI}
\sum_{i=0}^n\left(  \widetilde a_i * \psi^{(i)}+ \psi^{(i)} *  \widetilde b_i\right) + \widehat F \psi=f
\end{equation}
where $\widehat F$ is the operator {(\ref{bf})}, $\widetilde
a_i,\widetilde b_i$ are given by:
\begin{eqnarray}\label{hh}
 \widetilde a_i=\frac{1}{2}a_{i}-\sum_{k=1}^{n-i} a_{i+k} \left(\begin{gathered}
 i+k \\
  k  \\
\end{gathered}  \right)\delta^{(k-1)}(x), \quad i=0,...,n-1
\end{eqnarray}
\begin{eqnarray}\label{hhh}
  \widetilde b_i=\frac{1}{2}a_{i}+\sum_{k=1}^{n-i} a_{i+k} \left(\begin{gathered}
 i+k \\
  k  \\
\end{gathered}  \right)\delta^{(k-1)}(x), \quad i=0,...,n-1
\end{eqnarray}
and $\widetilde a_n=\widetilde b_n = a_n/2$. Notice that $\widetilde a_i+\widetilde b_i = a_i$ for all
$i=0,...,n$.
\end{theorem}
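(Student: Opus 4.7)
The plan is to establish the identity
\[
\sum_{i=0}^n a_i \widetilde D^i \psi \;=\; \sum_{i=0}^n \bigl(\widetilde a_i * \psi^{(i)} + \psi^{(i)} * \widetilde b_i\bigr)
\]
for every $\psi \in \A$. Once this is proved, the equivalence of (\ref{importanteC}) and (\ref{importanteI}) is immediate, since both differ only by the common term $\widehat F \psi - f$. Note that $\widetilde a_i + \widetilde b_i = a_i$ by construction, so the subtlety lies entirely in matching the singular contributions on the two sides.

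First I would expand the left-hand side using Theorem \ref{importanteJ} at $x_0 = 0$, writing
\[
\widetilde D^i \psi \;=\; D^i \psi + \sum_{j=1}^i \binom{i}{j}\,\widehat\Gamma^{(j-1)} D^{i-j}\psi,
\]
and then replacing $\widehat\Gamma^{(j-1)} F = F * \delta^{(j-1)}(x) - \delta^{(j-1)}(x) * F$. Since $a_i \in \C^\infty$, it is a smooth factor, so by associativity of $*$ (Theorem \ref{impo}) and the fact that $*$ restricted to smooth--distribution pairs is the ordinary dual product, we may freely move $a_i$ across the product: $a_i\,[\psi^{(l)} * \delta^{(k-1)}] = \psi^{(l)} * [a_i\,\delta^{(k-1)}]$, and similarly on the other side. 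After summing over $i$, substituting $l = i-j$, $k = j$, and switching the order of summation, one obtains
\[
\sum_{i=0}^n a_i \widetilde D^i \psi \;=\; \sum_{i=0}^n a_i \psi^{(i)} + \sum_{i=0}^{n-1}\sum_{k=1}^{n-i}\binom{i+k}{k}\Bigl[\psi^{(i)} * (a_{i+k}\delta^{(k-1)}) - (a_{i+k}\delta^{(k-1)}) * \psi^{(i)}\Bigr].
\]

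Next I would compute the right-hand side directly from (\ref{hh}) and (\ref{hhh}). Writing $\widetilde a_i = \tfrac{1}{2}a_i - p_i$ and $\widetilde b_i = \tfrac{1}{2}a_i + p_i$ with
\[
p_i \;=\; \sum_{k=1}^{n-i}\binom{i+k}{k}\,a_{i+k}\,\delta^{(k-1)}(x), \quad p_n \equiv 0,
\]
the smoothness of $a_i$ gives $a_i * \psi^{(i)} = \psi^{(i)} * a_i = a_i\psi^{(i)}$, hence
\[
\widetilde a_i * \psi^{(i)} + \psi^{(i)} * \widetilde b_i \;=\; a_i\psi^{(i)} \;+\; \psi^{(i)} * p_i \;-\; p_i * \psi^{(i)}.
\]
Summing this over $i = 0,\ldots,n$ (the $i=n$ term contributes only $a_n \psi^{(n)}$ because $p_n = 0$) produces exactly the expression derived in the previous step, which establishes the claimed identity.

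The calculation is essentially algebraic and the only technical point to watch is the combinatorial reindexing of the double sum, together with the justification for commuting the smooth coefficient $a_{i+k}$ inside the $*$-product; the former is a routine manipulation using Pascal-type identities, and the latter is immediate from the properties listed in Theorem \ref{impo}. I do not expect a genuine obstacle, only careful bookkeeping.
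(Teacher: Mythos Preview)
Your proposal is correct and follows essentially the same route as the paper: expand $\widetilde D^i\psi$ via (\ref{importanteJ}), reindex the double sum by setting $l=i-j$, $k=j$, and use the smoothness of the $a_i$ to write $a_i\psi^{(i)}=\tfrac12 a_i*\psi^{(i)}+\psi^{(i)}*\tfrac12 a_i$ and to move $a_{i+k}$ inside the $*$-product via associativity. One minor remark: no Pascal identity is actually needed for the reindexing---it is a pure change of summation variables---so that comment in your final paragraph can be dropped.
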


\begin{proof}
{ We want to show that the equations (\ref{importanteC}) and (\ref{importanteI}) are equivalent. Substituting
(\ref{importanteJ}) in equation (\ref{importanteC}) we get:}
\begin{eqnarray}\label{kk}
{\mathbin{\sum_{i=0}^n a_{i} \psi^{(i)}+ \sum_{i\geq j=1}^n  a_{i} \left( \begin{gathered}
  i \hfill \\
  j \hfill \\
\end{gathered}  \right) \widehat \Gamma^{(j-1)}\psi^{(i-j)}+\widehat F\psi=f}}
\end{eqnarray}
The first term, in (\ref{kk}), can be written as
 \begin{eqnarray}\label{kk1}
 {\mathbin{\sum_{i=0}^n a_{i} \psi^{(i)}=\sum_{i=0}^n  \left(\frac{a_{i}}{2} *\psi^{(i)}+\psi^{(i)}*\frac{a_{i}}{2}\right)}}
 \end{eqnarray}
since the coefficients $a_i$ are smooth. Moreover, the second term satisfies:
\begin{eqnarray}\label{kk2}
{\mathbin{\sum_{i\geq j=1}^n  a_{i} \left( \begin{gathered}
  i \hfill \\
  j \hfill \\
\end{gathered}  \right) \widehat \Gamma^{(j-1)}\psi^{(i-j)}=\sum_{i=0}^{n-1} \sum_{k=1}^{n-i} a_{i+k} \left( \begin{gathered}
  i+k  \\
  k  \\
\end{gathered}  \right) \widehat \Gamma^{(k-1)}\psi^{(i)}}} \, .
\end{eqnarray}
Substituting (\ref{kk1}) and (\ref{kk2}) in (\ref{kk}) and using
(\ref{Gamma1}), we easily obtain (\ref{importanteI}) with $
\widetilde a_i$ and $\widetilde b_i$ defined by (\ref{hh}) and
(\ref{hhh}).
\end{proof}

\section{Simple example}\label{Simple example}

In this section we study a simple example in order to illustrate
the previous results. Let the ODE$_1$ be the equation:
\begin{equation}\label{yy}
\psi''+k^2\psi=0,
\end{equation}
where $k\in \RE_+$. Assume that ${I} =\{0\}$, and that the interface conditions (C2)
are of the form:
\begin{equation}\label{jji}
{ A} \overline{\psi(0^-)}={ B} \overline{\psi(0^+)}
\end{equation}
where:
\begin{equation}\label{Matricesex}
\ A= \left[ \begin{array}{rr} k_1 & 0 \\
0 & k_2
\end{array} \right] \quad , \quad
\ B= \left[ \begin{array}{cc} 1 & 0\\
0 & 1
\end{array} \right]
\end{equation}
with $\ k_1,k_2\in \RE$. Notice that if $\ k_1=k_2=0$ then the
conditions (\ref{jji}) are separating. If $\ k_1,k_2\neq0$,
they are interacting.

According to Theorem \ref{importante}, the ODE$_2$ for this system can be written in the form
\begin{equation}\label{hh1}
  \widetilde D^{2} \psi +k^2\psi+ \widehat F \psi=0,
\end{equation}
where  $ \widetilde D$ is the new derivative operator (\ref{pp})
and $\widehat F$ is given by (\ref{bf}) (notice that $m=n=2$):
\begin{equation}\label{r}
\widehat F\psi=(k_1 \widehat \delta _--\widehat \delta _+)\psi +
(k_2\widehat \delta' _--\widehat \delta' _+ )\psi'+(k_2\widehat
\delta _--\widehat \delta _+ )\psi'' \, .
\end{equation}

Let us then solve (\ref{hh1}) explicitly. From (\ref{importanteJ})
and (\ref{r}), we easily conclude that on $\RE_{\mathbin{+}}$ and
$\RE_{\mathbin{-}}$ the eq.(\ref{hh1}) reduces to (\ref{yy}).
Hence, its global solution is of the form
\begin{equation}\label{eE}
\psi=H_-\psi_-+H\psi_++\Delta,
\end{equation}
where $ \text{supp} \, \Delta\subseteq\left\{0\right\}$  and
$\psi_\pm \in \C^{\infty}$ satisfy (\ref{yy}). Moreover, from
(\ref{F2}),
\begin{equation}\label{rr}
\widehat F\psi= \delta(x)(k_1 \psi_-(0)-\psi_+(0)) +\delta '(x)( k_2 \psi'_-(0)-\psi'_+(0)) \, .
\end{equation}
Substituting (\ref{eE}) in (\ref{hh1}), and using (\ref{qq}) and (\ref{rr}), we get
\begin{eqnarray}\label{cc}
&&H_-(\psi''_-+k^2\psi_-)+H(\psi''_++k^2\psi_+)+\Delta''+k^2 \Delta\\
\nonumber \ && +\delta(x)(k_1 \psi_-(0)-\psi_+(0))+\delta '(x)( k_2 \psi'_-(0)-\psi'_+(0))=0
\end{eqnarray}
Separating the terms that depend on the delta Dirac from those that do not, we get from (\ref{cc})\\
$$
H_-(\psi''_-+k^2\psi_-)+H(\psi''_++k^2\psi_+)=0 \Longleftrightarrow \left\{ \begin{array}{l}
\psi''_-+k^2\psi_-=0 \quad \mbox{on}\quad \RE_- \\
\psi''_++k^2\psi_+=0 \quad \mbox{on}\quad \RE_+
\end{array}
\right.
$$
and
\begin{equation}\label{im}
\Delta''+k^2 \Delta+\delta(x)(k_1 \psi_-(0)-\psi_+(0)) +\delta '(x)( k_2 \psi'_-(0)-\psi'_+(0))= 0
\end{equation}
The terms of order higher than two yield $\Delta'' + k^2 \Delta=0$. Since supp $\Delta \subseteq \{0\}$ this
implies $\Delta=0$. Hence, eq.(\ref{im}) reduces to
$$\delta(x)(k_1 \psi_-(0)-\psi_+(0)) +\delta '(x)( k_2 \psi'_-(0)-\psi'_+(0)) = 0
\Leftrightarrow \left\{ \begin{array}{rll}
   \psi_+(0)=k_1\psi_-(0) \\
   \psi'_+(0)=k_2\psi'_-(0)
\end{array}\right.$$
which are just the original interface conditions. We conclude that the solutions of ODE$_2$ are of the form
$$
\psi=H_-\psi_-+H\psi_+
$$
where $\psi_\pm$ satisfy the eq.(\ref{yy}) and the interface
conditions (\ref{jji},\ref{Matricesex}).

Equivalently, the ODE$_2$ can be written in the form given by Theorem \ref{importanteq}
\begin{equation} \label{SformEx1}
\sum_{i=0}^2\left(  \widetilde a_i * \psi^{(i)}+ \psi^{(i)} *  \widetilde b_i\right) + \widehat F \psi=0
\end{equation}
where $\widehat F$ is  given by (\ref{r}) or (\ref{rr}), and
$$
\left\{ \begin{array}{l} \widetilde a_0=\tfrac{1}{2} k^2-\delta ^{(1)}(x)\\
\widetilde b_0=\tfrac{1}{2} k^2+\delta ^{(1)}(x)
\end{array} \right.
\quad , \quad \left\{ \begin{array}{l} \widetilde a_1=- \widetilde b_1=-2 \delta(x) \\
\widetilde a_2= \widetilde b_2= \tfrac{1}{2}
\end{array} \right.
$$
These coefficients can be easily calculated from (\ref{hh},\ref{hhh}). Notice that $\widetilde a_i+\widetilde
b_i=a_i$, $i=0,1,2$, where $a_i$, $i=0,1,2$ are the coefficients of eq.(\ref{yy}) written in the form
(\ref{iiii}).

Two interesting cases described by eq.(\ref{hh1}) (or by eq.(\ref{SformEx1}))
are the total confining case ($k_1=k_2=0$) where the global
solutions are of the form $\psi= H_- \psi_-$ (and thus {\it
confined} to $\RE_-$, but unrestricted otherwise), and the {\it continuous} conditions,
corresponding to $k_1=k_2=1$, for which the ODE$_2$ is equivalent
to the original ODE$_1$.

\section{The $n$-order derivative operator with point interactions} \label{Derivative}


As a by-product of the previous results, in this section we obtain an explicit relation between the 
singular perturbations of the $n$-th order derivative operator $\wh L_0= (i D )^n$ with domain in the Sobolev space $D(\wh L_0)= \W^n_2(\RE)$, and the extensions (self-adjoint or not) of the symmetric restriction of $\wh L_0$ to $\DO(\RE \backslash \{0\})$:
$$
\wh S = \left. \wh L_0 \right|_{\DO(\RE \backslash \{0\})}
$$
More precisely, we will consider the extensions $\wh L_{AB}$ of $\wh S$, with domain (cf. eq.(\ref{ICS})):
\begin{equation} \label{DomainLAB}
D(\wh L_{AB})= \{ \psi \in \W^n_2(\RE_-) \oplus \W^n_2(\RE_+): 
A \overline{\psi(0^-)}= B \overline{\psi(0^+)} \}
\end{equation}
where $A,B$ are two $m \times n$ matrices ($m \le n$). These operators are restrictions of the adjoint of $\wh S$:
$$
\wh S^*: \W^n_2(\RE_-) \oplus \W^n_2(\RE_+) \longrightarrow \L^2(\RE) 
$$
$$
\wh S^* (H_-\psi_-+H\psi_+) = (i)^n \left( H_- \psi_-^{(n)} + H \psi_+^{(n)} \right)
$$
where $\psi_\pm \in \W^n_2(\RE)$. 
We will then show that each operator $\wh L_{AB}$ coincides with a particular singular perturbation of the operator $\wh L_0$. These singular perturbations are formulated intrinsically using the formalism of the previous sections. 
We remark that the set of extensions of $\wh S$ with domain (\ref{DomainLAB}) includes all self-adjoint extensions of $\wh S$, as well as non-self-adjoint ones. 

In order to present the main result of this section, we first need to slightly extend the domain of definition of the product $*$ and of the operators $\wh\delta^{(n)}_\pm $ and $\wh\Gamma^{(n)}$. Let us introduce the spaces 
$$
\A^n=\C_p^n \oplus \F^n \quad , \quad \A^n_I=(\C_p^n\cap \C^n(\RE \backslash I)) \oplus \F^n_I  \quad , \quad  n \ge 0
$$
where $I\subset \RE$ is a finite set, $\C_p^n$ is the set of piecewise $n$-th order differentiable functions, $\F^n$ (respectively, $\F^n_I$) is the space of Schwartz distributions of finite support (respectively, of support $I$) and of order less or equal to $n+1$. We have of course, $\A^\infty=\A$ and $\A^\infty_I=\A_I$.
The definition of the product $F*G$, given by (\ref{55}), can be trivially extended to $F,G \in \A^n$, and the product still satisfies (\ref{prodf}). Notice that for $F,G \in \A^n$, we have $f_i,g_i \in \C^n$ in (\ref{prodf}). This definition and its properties were studied in detail in \cite{DPJ16}.
   
Using the new product, the operators $\wh\delta^{(n)}_\pm $ and $\wh\Gamma^{(n)}$ can also be extended to $\A^m$ for all $m\ge n$. They still satisfy (\ref{hj}) for $F \in \A^m$ and $m \ge n+j$. Likewise, the operator $\wt D_I$ can be extended to the form $\wt D_I : \A^n \longrightarrow \DO'$ for all $n \ge 0$. This generalization also satisfies (\ref{DFormF}) for $F \in \A_I^{k-1}$, and (\ref{qq}) for $F \in \A_{\{0\}}^{k-1}$. Finally the domain of the interface operator $\wh F$ (\ref{bf}) can be extended to $\A^{n-1}$ and the result (\ref{F2}) is still valid for $\psi \in \A^{n-1}$. Then:

\begin{theorem}

The operator $\wh L_{AB}$, restriction of $\wh S^*$ to the domain (\ref{DomainLAB}), coincides with the operator 
$$
\wh L_F= (i \tilde D)^n + \wh F
$$
defined on its maximal domain
$$
D_{\rm max}(\wh L_F) := \{\psi \in \L^2(\RE) : \, \wh L_F \psi \in \L^2(\RE) \}  \, .
$$
Here, $\wh F$  is the extension of the interface operator (\ref{bf}) to $\A^{n-1}$.

\end{theorem}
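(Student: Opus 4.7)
The plan is to prove the equality $\wh L_{AB} = \wh L_F$ by establishing the two inclusions $D(\wh L_{AB}) \subseteq D_{\rm max}(\wh L_F)$, with agreement of the operators on the smaller domain, and $D_{\rm max}(\wh L_F) \subseteq D(\wh L_{AB})$.

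For the forward inclusion, I would fix $\psi \in D(\wh L_{AB})$, so that $\psi = H_-\psi_- + H\psi_+$ with $\psi_\pm \in \W_2^n(\RE_\pm)$ (extended to $\C^{n-1}$-functions on $\RE$) satisfying $A\overline{\psi(0^-)} = B\overline{\psi(0^+)}$. Since $\psi \in \A^{n-1}_{\{0\}}$, the Corollary to Theorem \ref{DFormFTheo}, in its extension to $\A^{n-1}_{\{0\}}$, gives $\wt D^n \psi = H_-\psi_-^{(n)} + H\psi_+^{(n)}$, while eq.(\ref{F2}) yields
$$
\wh F\psi = \sum_{i=0}^{m-1}\sum_{j=0}^{n-1}\bigl(A_{ij}\psi_-^{(j)}(0) - B_{ij}\psi_+^{(j)}(0)\bigr)\delta^{(i)}(x),
$$
which vanishes by the interface condition. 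Hence $\wh L_F\psi = (i)^n(H_-\psi_-^{(n)}+H\psi_+^{(n)}) = \wh S^*\psi = \wh L_{AB}\psi \in \L^2(\RE)$, so the forward inclusion holds and the operators agree on $D(\wh L_{AB})$.

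The reverse inclusion is where the main obstacle lies. Let $\psi \in D_{\rm max}(\wh L_F)$, i.e.\ $\psi \in \A^{n-1}\cap\L^2(\RE)$ with $\wh L_F\psi \in \L^2(\RE)$. The $\L^2$-membership excludes any purely singular (Dirac-type) component in $\psi$, so $\psi \in \C_p^{n-1}$. I first claim that its discontinuities (of $\psi$ itself or of any of its derivatives up to order $n-1$) must lie in $\{0\}$: indeed, a jump of $\psi^{(k)}$ at some $x_0 \neq 0$ produces a term proportional to $\delta^{(n-k-1)}(x - x_0)$ in $D^n\psi$ and, by eq.(\ref{importanteJ}), also in $\wt D^n\psi$, since the $\wh\Gamma^{(j)}$-corrections are all supported at $\{0\}$; this singular contribution cannot be cancelled by $\wh F\psi$ (also supported at $\{0\}$), contradicting $\wh L_F\psi \in \L^2$. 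Hence $\psi \in \A^{n-1}_{\{0\}}$, and I may write $\psi = H_-\psi_- + H\psi_+$ with $\psi_\pm \in \C^{n-1}(\RE)$ and $\psi|_{\RE_\pm} \in \L^2(\RE_\pm)$.

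Applying once more the extended Corollary to Theorem \ref{DFormFTheo} together with eq.(\ref{F2}), I obtain the decomposition
$$
\wh L_F\psi = (i)^n\bigl(H_-\psi_-^{(n)} + H\psi_+^{(n)}\bigr) + \sum_{i=0}^{m-1}\sum_{j=0}^{n-1}\bigl(A_{ij}\psi_-^{(j)}(0) - B_{ij}\psi_+^{(j)}(0)\bigr)\delta^{(i)}(x).
$$
Since $\wh L_F\psi$ is a regular ($\L^2$) distribution, the singular term must vanish identically, which is precisely the interface condition $A\overline{\psi(0^-)} = B\overline{\psi(0^+)}$; and the regular term then lies in $\L^2$, forcing $\psi_\pm^{(n)} \in \L^2(\RE_\pm)$. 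Combined with $\psi|_{\RE_\pm} \in \L^2(\RE_\pm)$, standard Sobolev interpolation yields $\psi_\pm \in \W_2^n(\RE_\pm)$. Therefore $\psi \in D(\wh L_{AB})$ and $\wh L_F\psi = \wh L_{AB}\psi$, which concludes the proof.
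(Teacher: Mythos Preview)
Your proof is correct and follows essentially the same strategy as the paper's: both compute $\wh L_F\psi$ via eq.~(\ref{qq}) and eq.~(\ref{F2}) and then read off the interface conditions from the vanishing of the singular part. The only organizational difference is that the paper obtains $\psi\in\W^n_2(\RE\backslash\{0\})$ in one stroke by noting that $\wh L_F=(iD)^n$ on $\RE\backslash\{0\}$ (so the maximal $\L^2$-domain away from $0$ is already the Sobolev space), whereas you first locate $\psi$ in $\A^{n-1}_{\{0\}}$ by a singular-support argument and recover the full Sobolev regularity at the end via interpolation; this is a harmless reordering rather than a different approach.
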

 
 \begin{proof}

For $x\not= 0 $ the operator $\wh L_F$ is given by $ (iD)^n $. The maximal domain of $(i D)^n$ for $x \not= 0$ is the Sobolev space $\W^n_2(\RE \backslash \{0\})$.   
Hence, every $\psi \in D_{\rm max}(\wh L_F)$ can be written in the form:
$$
\psi = H_- \psi_- + H \psi_+
$$
where $\psi_-,\psi_+ \in \W^n_2(\RE)$. 

We now prove that $\psi$ satisfies the boundary conditions (\ref{DomainLAB}). Acting with $\wh L_F$, and using (20) we find:
$$
\wh L_F \psi = (i\wt D)^n \psi+\wh F \psi = (i)^n \left( H_-\psi_-^{(n)}+H\psi_+^{(n)} \right) + \wh F \psi
$$
where we took into account that $\psi \in \W^n_2(\RE \backslash \{0\}) \subset \A^{n-1}_{\{0\}}$ and that (\ref{qq}) is valid in this domain. Using (\ref{F2}) (which is also valid for $\psi \in \A^{n-1}_{\{0\}}$) we find:
\begin{eqnarray*}
	\wh L_F \psi \in \L^2 &\Longleftrightarrow & \wh F \psi \in \L^2 \Longleftrightarrow \wh F\psi =0 \\
	&\Longleftrightarrow & \sum_{j=0}^{n-1} {A_{ij}\psi_-^{(j)}(0)}=\sum_{j = 0}^{n - 1} {B_{ij}\psi^{(j)}_+(0)}\, ,
\quad i = 0, \ldots ,m-1
\end{eqnarray*}
which are just the boundary conditions on the domain of $\wh L_{AB}$.

 \end{proof}

An important example from the set of operators $\wh L_F$ is
$$
\wh L_F = \wh S^* = (i\wt D)^n
$$
which corresponds to the case where $A=B=0$.

Finally, we remark that Theorem 6.1 is equally valid if $\wh F:D(\wh F) \to \DO'$  is an arbitrary operator satisfying: 

(i) supp $\wh F \psi \subseteq \{0\}$, for all $\psi \in \W^n_2(\RE \backslash \{0\}) \subseteq D(\wh F)$, 

(ii) Ker $\wh F \cap \W^n_2(\RE \backslash \{0\}) = D(\wh L_{AB})$. \\
and thus there is a large class of operators $\wh L_F$ coinciding with each s.a. perturbation of $\wh L_0$.

\section{Previous approaches and outlook}  \label{Conclusions} 

In this paper we used the intrinsic formalism developed in \cite{DP09,DPJ16} to obtain a solution for the problem of constructing an ODE (strictly defined within the space of Schwartz distributions) whose global solutions satisfy a prescribed set of interface conditions. We addressed the (inverse) problem of constructing an equation from the properties of its global solutions. On the other hand, we have not solved the general problem of determining the (properties of the) solutions of the ODE$_2$ when the coefficients are arbitrary elements of $\A$. This will be the topic of a future work, where we will show, in particular, that in such general case the solutions might be singular distributions, and not only regular functions as in the case considered here. We will also address the problem of existence and uniqueness of the solutions of ODEs with arbitrary coefficients in $\A$.

In this paper we have also studied the closely related problem of determining the singular perturbation of the $n$-th order derivative operator whose domain satisfies a prescribed set of interface conditions. The two problems (for ODEs and operators) were solved in terms of exactly the same type of singular perturbations. The results for the operator $(iD)^n$ generalize those of \cite{DPJ16} where we have already constructed an intrinsic boundary potential formulation for all s.a. Schr\"odinger operators with a point interaction.

The problem of determining explicit formulations of singular perturbations of linear operators was studied before by other authors. Most relevant are the results of Kurasov and Boman \cite{Kurasov,KurasovBoman} using a theory of distributions for discontinuous test functions (see also \cite{Albeverio2} and \cite{Brasche}). We will finish this section with a brief discussion of the relation between their approach and ours.

The space $\K$ of discontinuous test functions is the subspace of $\C^\infty(\RE\backslash \{0\}) \cap \C_p^\infty (\RE)$ of functions with compact support. 
Distributions over discontinuous test functions are the linear and continuous maps from $\K$ to $\CO$, where convergence in $\K$ is defined in the usual sense of test functions. 

In $\K'$ a distributional derivative can be defined by:
$$
\langle D_K\phi , g \rangle =- \langle \phi, \partial_x g \rangle \quad , \quad g \in \K
$$
where $\partial_x$ is the usual pointwise derivative, and $\langle \, ,\, \rangle$ is the dual bracket in $\K' \times \K$.
In the same way, a product of $\phi \in \K'$ by $f \in \K_{\rm loc}=\C^\infty(\RE\backslash \{0\}) \cap \C_p^\infty (\RE)$  can be defined by duality
$$
\langle \phi \star_K f , g \rangle =- \langle \phi, f g \rangle \quad , \quad g \in \K
$$
since $fg \in \K$.      

This formalism was used in \cite{Kurasov} to define singular perturbations of one-dimensional Schr\"odinger operators:
\begin{equation} \label{HK}
\wh H=-D_K^2 + \wh B_H
\end{equation}
where $\wh B_H$ is a singular boundary operator of the form:
$$
\wh B_H \psi = a D_K^2 \delta \star_K \psi + D_K (b \delta + c \delta') \star_K \psi + (d \delta+ e \delta') \star_K \psi  
$$
where $a,b,c,d,e \in \CO$.
The same formalism was also used to defined singular perturbations of the $n$-th order derivative operator \cite{KurasovBoman}
\begin{equation} \label{LK}
\wh L= (i D_K)^n + \wh B_L 
\end{equation}
in terms of a singular interaction term:
$$
\wh B_L= \sum_{i,j=0}^{m-1} c_{ij} \langle \delta^{(j)}, \cdot \, \rangle  \, \delta^{(i)} \, .
$$
Both operators $\wh H$ and $\wh L$ are of the general form (in the former case $n=2$):
$$
\wh A :D(\wh A) \subset \W^n_2(\RE \backslash \{0\}) \longrightarrow \K'
$$
and if we set $D(\wh A)=D_{\rm max}(\wh A)=\{\psi \in \W^n_2(\RE\backslash \{0\}) : \wh A \psi \in \L^2(\RE) \}$ then $\wh A$ is {\it not} s.a. \cite{Kurasov}. In order to produce physically sensible results, we have to compose $\wh A$ with the projector 
$$
\wh\eta : \K' \longrightarrow \DO'
$$
to get $\wh A_D=\wh\eta\wh A$. Then, for suitable boundary operators $\wh B_H$ and $\wh B_L$, the operator $\wh A_D$ in the domain:
$$
D(\wh A_D) = D_{\rm max}(\wh A_D)=\{\psi \in \W^n_2(\RE\backslash \{0\}): \, \wh A_D \psi \in \L^2(\RE) \} \, ,
$$
coincides with one of the s.a. perturbations of $\wh H_0$ or $\wh L_0$, respectively.

Using the operators $\wh\eta \wh H$, Kurasov was able to determine a singular perturbation formulation for the entire set of s.a. Schr\"odinger operators with a point interaction \cite{Kurasov}. 
An equivalent result for the $n$-th order derivative operator was obtained by Kurasov and Boman using the operators $\wh\eta \wh L$ \cite{KurasovBoman}. 

In the context of differential operators, the motivation of Kurasov's formulation is, of course, closely related to ours. The two formalisms, however, display some important differences: 

(1) Kurasov's formulation of the operators with singular perturbations, like the formulations in terms of generalized functions, is not intrinsic. The new operators are written in terms of the new distributions in $\K'$, the new derivative operator $D_K$, and a new product $\star_K$. This structure satisfies a delicate set of properties: $D_K$ does not satisfy the Leibniz rule, the derivative of a constant is not zero, and even for smooth functions the new derivative yields objects that are outside from $\DO'$. Finally, as we have seen, the resulting formulation has to be projected down to $\DO'$ in order to yield physically meaningful results.

(2) Kurasov's formulation of the $n$-th order derivative operator with s.a. interface conditions requires the use of infinite coupling constants in order to model all possible cases \cite{KurasovBoman}. In Problem 1 we have a more general situation, with a more general differential operator, and arbitrary linear interface conditions (s.a. and non s.a.). It is possible that Kurasov's approach can be used to determine an explicit formulation of the ODE$_2$ in some particular cases, but it seems less likely that a general solution can be obtained. In any case, ODEs with singular coefficients have never been studied in the literature using Kurasov's formalism, so it would be an interesting problem to determine which cases can be solve using this approach.

(3) If we consider the {\it direct} problem of formulating a general linear ODE (of the form (\ref{ODE1})) with singular coefficients and admit the possibility of singular solutions, then Kurasov's approach does not seem to be possible because, in general, even the term $a_0 \psi$ is ill-defined.  In a future paper we will address this problem for a class of singular coefficients using the intrinsic approach. We will see that in the general case the solutions of these ODEs can be discontinuous or singular.  

\bigskip

\noindent\textbf{Acknowledgements}. Cristina Jorge was supported
by the Ph.D grant SFRH/BD/85839/2012 of the Portuguese Science
Foundation (FCT). N.C. Dias and J.N. Prata were supported by the
Portuguese Science Foundation (FCT) under the grant
PTDC/MAT-CAL/4334/2014.

\vspace{0.5cm}

*******************************************************************

\textbf{Author's addresses:}

\begin{itemize}
\item \textbf{Nuno Costa Dias}\footnote{Corresponding author} and \textbf{Jo\~ao Nuno Prata:
}Grupo
de F\'{\i}sica Matem\'{a}tica, Universidade de Lisboa, Campo Grande, Edif\'{\i}cio C6, 1749-016 Lisboa, Portugal and Escola Superior N\'autica Infante D. Henrique, Av. Engenheiro Bonneville Franco, 2770-058 Pa\c{c}o de Arcos, Portugal.

\item \textbf{Cristina Jorge}: Departamento de Matem\'{a}tica.
Universidade Lus\'{o}fona de Humanidades e Tecnologias. Av. Campo
Grande, 376, 1749-024 Lisboa, Portugal and Grupo de F\'{\i}sica
Matem\'{a}tica, Universidade de Lisboa, Campo Grande, Edif\'icio C6,
1749-016 Lisboa, Portugal.

\end{itemize}

\vspace{0.3cm}

\small

{\it E-mail address} (NCD, Corresponding author): ncdias@meo.pt

{\it E-mail address} (CJ): cristina.goncalves.jorge@gmail.com

{\it E-mail address} (JNP): jnprata@FC.UL.PT \\

\normalsize

\vspace{0.3cm}

**************************************************************************

\end{document}